\newtheorem{theorem}{Theorem}[section]
\theoremstyle{definition}
\newtheorem{claim}[theorem]{Claim}
\newtheorem{lemma}[theorem]{Lemma}
\newtheorem{remark}[theorem]{Remark}
\newtheorem{conjecture}[theorem]{Conjecture}
\def\BZ{\mathbbm Z}
\def\BQ{\mathbbm Q}
\def\BR{\mathbbm R}
\def\BC{\mathbbm C}
\def\SL{\mathrm{SL}}
\def\PSL{\mathrm{PSL}}
\def\Im{\mathrm{Im}}
\def\be{\begin{equation}}
\def\ee{\end{equation}}
\def\SL{\mathrm{SL}}
\def\tr{\mathrm{tr}}
\def\NT{\mathcal{N}}
\NewDocumentCommand{\ceil}{s O{} m}{%
  \IfBooleanTF{#1} 
    {\left\lceil#3\right\rceil} 
    {#2\lceil#3#2\rceil} 
}
\NewDocumentCommand{\floor}{s O{} m}{%
  \IfBooleanTF{#1} 
    {\left\lfloor#3\right\rfloor} 
    {#2\lfloor#3#2\rfloor} 
}
\begin{document}
\title[On the trace fields of hyperbolic Dehn fillings]{
       On the trace fields of hyperbolic Dehn fillings}
\author{Stavros Garoufalidis}
\address{
  International Center for Mathematics, Department of Mathematics \\
  Southern University of Science and Technology \\
  Shenzhen, China \newline
  {\tt \url{http://people.mpim-bonn.mpg.de/stavros}}}
\email{stavros@mpim-bonn.mpg.de}
\author{BoGwang Jeon}
\address{Department of Mathematics, POSTECH\\
77 Cheong-Am Ro, Pohang, South Korea}
\email{bogwang.jeon@postech.ac.kr}
\thanks{{\em Key words and phrases}: cusped hyperbolic 3-manifolds, Dehn-filling, trace field, A-polynomial, Newton polygon, Mahler's measure, Lehmer's Conjecture, fewnomials.}
\thanks{{\em 2020 Mathematics Subject Classification}: 57K31, 57K32, 11R06.}

\date{29 February 2024}
\begin{abstract}
  Assuming Lehmer's conjecture, we estimate the degree of the trace field
  $K(M_{p/q})$ of a hyperbolic Dehn filling $M_{p/q}$ of a 1-cusped
  hyperbolic 3-manifold $M$ by
\begin{equation*}
  \frac{1}{C}(\max\;\{|p|,|q|\})\leq \text{deg }K(M_{p/q})
  \leq C(\max\;\{|p|,|q|\})
\end{equation*}
where $C=C_M$ is a constant that depends on $M$. 
\end{abstract}

\maketitle

{\footnotesize
\tableofcontents
}


\section{Introduction}
\label{sec.intro}

\subsection{Main result}
An important arithmetic invariant of a complete hyperbolic 3-manifold $M$ of finite
volume
is its trace field $K(M)=\BQ(\tr \;\rho(g) \,\, | \,\, g \in \pi_1(M))$ generated by
the traces $\rho(g)$ of the elements of the fundamental group $\pi_1(M)$ of the
geometric representation $\rho: \pi_1(M) \to \PSL_2(\BC)$. Mostow rigidity implies
that $\rho$ is rigid, hence it can be conjugated to lie in $\PSL_2(\overline{\BQ})$,
and as a result, it follows that $K(M)$ is a number field. 

Given a 1-cusped hyperbolic 3-manifold $M$, Thurston showed that the manifolds
$M_{p/q}$ obtained by $p/q$-Dehn filling on $M$ are hyperbolic for all but finitely
many pairs of coprime integers $(p,q)$~\cite{Thurston}. Thus, it is natural to
ask how the trace field $K(M_{p/q})$ depends on the Dehn-filling parameters.
In \cite{hod}, C. Hodgson proved the following.

\begin{theorem}[Hodgson]
  Let $M$ be a $1$-cusped hyperbolic $3$-manifold. Then there are only
  finitely many hyperbolic Dehn fillings of $M$ of bounded trace field
  degree. 
\end{theorem}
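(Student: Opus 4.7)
The plan is to reduce the theorem to a quantitative growth statement, namely that $[K(M_{p/q}):\BQ] \to \infty$ as $\max\{|p|,|q|\} \to \infty$. First I would reduce to a single generator of the trace field: let $\mu,\lambda$ be the meridian and longitude of the cusp, and let $m_{p/q},\ell_{p/q} \in \overline{\BQ}$ be the eigenvalues of lifts of $\rho(\mu),\rho(\lambda)$ to $\SL_2(\overline{\BQ})$. Since $\tr\rho(\mu) = m_{p/q} + m_{p/q}^{-1} \in K(M_{p/q})$, one has $[\BQ(m_{p/q}):\BQ] \le 2[K(M_{p/q}):\BQ]$, so it is enough to show $[\BQ(m_{p/q}):\BQ] \to \infty$.

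Second, I would set up an explicit polynomial equation for $m_{p/q}$. The pair $(m_{p/q},\ell_{p/q})$ lies on the A-polynomial curve $A_M(m,\ell)=0$ and simultaneously satisfies the Dehn-filling relation $m^p\ell^q = \varepsilon$ for some sign $\varepsilon\in\{\pm 1\}$ coming from the choice of spin-structure lift. Taking the resultant in $\ell$ produces $B_{p,q}(m) := \Res_\ell\bigl(A_M(m,\ell),\; m^p\ell^q - \varepsilon\bigr) \in \BZ[m]$ with $B_{p,q}(m_{p/q})=0$. Applying Bernstein--Kushnirenko to the Newton polygons of the two equations shows that the $m$-degree of $B_{p,q}$ is linear in $\max\{|p|,|q|\}$, which already gives the upper bound $[K(M_{p/q}):\BQ] \le C_M\cdot\max\{|p|,|q|\}$ stated in the abstract.

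Third, and more delicately, I would show that the irreducible factor of $B_{p,q}$ having $m_{p/q}$ as a root itself has degree growing linearly in $\max\{|p|,|q|\}$. The key geometric input is that the canonical component of $\{A_M=0\}$ is not contained in any translate of a proper subtorus of $(\BC^*)^2$ — a reflection of the non-triviality of the hyperbolic deformation at the complete structure (in particular, of the non-real cusp shape $\tau$). One then combines this with a Diophantine lower bound on the height of a non-torsion algebraic number of bounded degree: Dobrowolski's unconditional bound is enough for Hodgson's qualitative theorem, while the sharp linear lower bound in the abstract is what calls for Lehmer's conjecture. Either way, the bound forces the Galois orbit of $m_{p/q}$ to be large.

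The main obstacle is precisely this third step. Steps one and two are algebraic in a combinatorial and topological way, and control only the total number of solutions to the system $\{A_M=0,\; m^p\ell^q = \varepsilon\}$; a priori those solutions could decompose into many small Galois orbits, each yielding a Dehn filling with a correspondingly small trace field. Ruling out such a concentration is genuinely a Diophantine question about heights of algebraic points on a curve in $(\BC^*)^2$, and this is exactly where Lehmer's conjecture (or, in the qualitative Hodgson form, the weaker Dobrowolski bound) must be invoked.
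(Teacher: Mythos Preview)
First, note that the paper does not itself prove this statement: Hodgson's theorem is quoted as background, with proofs attributed to \cite{hod}, Long--Reid~\cite{LR}, and the second author's thesis~\cite{Jeon:thesis}. The paper's own contribution is the quantitative Theorem~\ref{main}, and its method is rather different from what you sketch: Section~\ref{sec.moduli} establishes delicate bounds on the moduli of the roots of $A_{p,q}(t)$ (Lemmas~\ref{21012208}--\ref{21011701}), and these feed into the proof of Theorem~\ref{21012507} to show that any non-cyclotomic irreducible factor of degree $o(\max\{|p|,|q|\})$ has Mahler measure tending to $1$; Lehmer's conjecture then gives the contradiction. That analytic root-location work has no counterpart in your outline, yet (with Northcott in place of Lehmer, since there are only finitely many non-cyclotomic monic integer polynomials of bounded degree and Mahler measure $\le 2$) it already yields the qualitative statement unconditionally.

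Your Steps 1--2 are fine and parallel the paper's setup, though the paper's substitution $(m,\ell)=(t^{-q},t^p)$ is cleaner than a resultant, since then $\mathscr{L}(A_{p,q})=\mathscr{L}(A)$ and hence $\mathscr{M}(A_{p,q})$ is bounded independently of $(p,q)$; your $B_{p,q}$ need not enjoy this. The real gap is Step~3. Dobrowolski is a \emph{lower} bound on the height of a non-torsion algebraic number, but you never produce a competing \emph{upper} bound small enough to force large degree: the naive estimate $h(t_0)\le d^{-1}\log\mathscr{L}(A)$ is compatible with Dobrowolski's $h(t_0)\ge c\,d^{-1}(\log\log d/\log d)^3$ for every $d$, so no contradiction arises. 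Your ``not contained in a subtorus translate'' observation is exactly the hypothesis of Bombieri--Masser--Zannier, and that is the missing ingredient: BMZ gives a uniform upper bound on the height of points of the $A$-polynomial curve lying in a proper algebraic subgroup of $(\BC^*)^2$, in particular on $m^p\ell^q=\pm1$. Under the bounded-degree hypothesis, Northcott then leaves only finitely many candidate points $(m,\ell)$, and one finishes by noting that a fixed non-torsion point cannot satisfy $m^{p_i}\ell^{q_i}=\pm1$ for infinitely many coprime $(p_i,q_i)$, contradicting hyperbolicity of the fillings. So your outline can be salvaged with BMZ in place of Dobrowolski and with that last pigeonhole step made explicit; this is a genuinely different route from the paper's root-location analysis, and by itself it does not upgrade to the linear lower bound of Theorem~\ref{main}.
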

Additional proofs of this theorem were given by Long--Reid~\cite[Thm.3.2]{LR}
and by the second author in his thesis~\cite{Jeon:thesis}.
A simple invariant of a number field is its degree. Hence, having the above theorem,
the next question is to ask for the behavior of the degree of the trace field of a
cusped hyperbolic $3$-manifold under Dehn filling. In this paper, we give a partial
and conditional answer to the question as follows:  

\begin{theorem}\label{main}
Let $M$ be a $1$-cusped hyperbolic $3$-manifold and $M_{p/q}$ be its $p/q$-Dehn filling. Assuming Lehmer's conjecture,
there exists $C=C_M$ depending on $M$ such that 
\begin{equation}\label{22033001}
\frac{1}{C}(\max\;\{|p|,|q|\})\leq \deg K(M_{p/q})\leq C(\max\;\{|p|,|q|\})
\end{equation}
\end{theorem}

Note that the upper bound in~\eqref{22033001} follows from a relatively easy
observation (see Theorems \ref{23083003}-\ref{21011903}) and so what really matters
in~\eqref{22033001} is its lower bound.

\subsection{Key Observation}\label{key}

The proof of the above theorem uses the fact that the geometric representation
of $M_{p/q}$ lies in the geometric component of the $\PSL_2(\BC)$-representation
variety of $M$ (known as Thurston's hyperbolic Dehn-surgery
theorem~\cite{Thurston}). The latter defines an algebraic curve in
$\BC^* \times \BC^*$ (the so-called $A$-polynomial curve~\cite{CCGLS}) in
meridian-longitude coordinates, and a suitable point in its intersection
with $m^p \ell^q=1$ determines the geometric representation of $M_{p/q}$.
Thus, the determination of the degree of $K(M_{p/q})$ is reduced to a bound
for the irreducible components of the Dehn-filling polynomial, a polynomial
whose coefficients are independent of $(p,q)$ for large $|p|+|q|$. 

To explain the key idea further in detail, as a toy model, let us assume the
$A$-polynomial of a $1$-cusped hyperbolic $3$-manifold $M$ is simply given as 
\begin{equation}\label{22010402}
m\Big(l-\frac{1}{l}\Big)+1+\frac{1}{m}\Big(\frac{1}{l}-l\Big)=0.
\end{equation}
Then finding the intersection between~\eqref{22010402} and $m^p\ell^q=1$ is
equivalent to solving
\begin{equation*}
t^{-q}(t^{p}-t^{-p})+1+t^{q}(t^{-p}-t^p)=0,  
\end{equation*}
which is normalized as 
\begin{equation}\label{22010401}
t^{2p+2q}-t^{2p}-t^{p+q}-t^{2q}+1=0
\end{equation}
under $0<q<p$. Since the Mahler measure of a polynomial concerns about the size of its roots (see Section \ref{19111503}), to apply it alongside Lehmer's conjecture (see Conjecture \ref{Lehmer}), we first bound the modulus of a root of \eqref{22010401} in terms of $p$ and $q$ as follows. Let $t_0$ is a root of~\eqref{22010401} with $|t_0|>1$. Then
\begin{equation*}
|t_0|^{2p+2q}<|t_0|^{2p}+|t_0|^{p+q}+|t_0|^{2q}+1< 4|t_0|^{2p}\Longrightarrow |t_0|^{2q}<4 
\end{equation*}
and, taking logarithms, 
\begin{equation*}
|t_0|<1+\frac{1}{q}.
\end{equation*}
If $S$ is an arbitrary constant with $\frac{p}{q}<S$, clearly
\begin{equation}\label{22010412}
|t_0|<1+\frac{S}{p}, 
\end{equation}
and this implies the Mahler measure of the minimal polynomial of $t_0$ is bounded above by  
\begin{equation*}
\Big(1+\frac{S}{p}\Big)^{\deg t_0}. 
\end{equation*}
According to Lehmer's conjecture, the above number is at least $1.176280818\dots$, thus there exists some constant $C$ depending only on $S$ such that\footnote{In this case, one may further obtain~\eqref{22010403}
  from~\eqref{22010412} unconditionally, thanks to Dimitrov's recent proof of the
  Schinzel-Zassenhaus conjecture. See Theorem \ref{dim} and Theorem \ref{22010601}.}  
\begin{equation}\label{22010403}
Cp<\deg\; t_0.
\end{equation}
Remark that~\eqref{22010403} already verifies Theorem \ref{main} for the given example partially over the following restricted domain of $p$ and $q$:
\begin{equation}\label{22033003}
\big\{(p,q)\in \mathbb{Z}^2\;:\;1<\frac{p}{q}<S\big\}, 
\end{equation}
and $C$, as a function of $S$, goes to $0$ as $S$ approaches $\infty$. 

Now suppose $\frac{p}{q}>S$. Let $t_0$ (with $|t_0|>1$) be a root of 
\begin{equation}\label{22010410}
t^{2p+2q}-t^{2p}-t^{p+q}-t^{2q}+1=t^{2p}(t^{2q}-1)-t^{p+q}-(t^{2q}-1)=0
\end{equation}
and $\delta_{p,q}$ be
\begin{equation}\label{22033005}
  \min \big\{|t_0^{2q}-1|\;|\;t_0\text{ is a root of }~\eqref{22010410}
  \text{ with }|t_0|>1\big\}.
\end{equation}
Then 
\begin{equation}\label{22040201}
\begin{gathered}
\delta_{p,q}|t_0^{2p}|(<|t_0^{2p}||t_0^{2q}-1|)<|t_0^{p+q}|+|t_0^{2q}|+1<3|t_0^{p+q}|\\
\Longrightarrow|t_0^{p-q}|<\frac{3}{\delta_{p,q}}\Longrightarrow
|t_0^{(1-\frac{1}{S})p}|<\frac{3}{\delta_{p,q}}\Longrightarrow |t_0|<1+\frac{D_{p,q}}{p}
\end{gathered}
\end{equation}
for some $D_{p,q}$ depending on $S$ and $\delta_{p,q}$. Hence, combining with
Lehmer's conjecture again, it follows that 
\begin{equation}\label{22040202}
C_{p,q}p<\deg\;t_0
\end{equation}
with $C_{p,q}$ depending on $D_{p,q}$. Note that, when $S$ is fixed to be sufficiently large, $D_{p,q}$ depends only on $\delta_{p,q}$, and 
\begin{equation*}
\lim_{\delta_{p,q}\rightarrow 0}D_{p,q}=\infty,\quad \lim_{D_{p,q}\rightarrow \infty}C_{p,q}=0. 
\end{equation*}
Consequently, if 
\begin{equation}\label{22040207}
\inf _{\substack{(p,q)\in \mathbb{Z}^2\\0<q<p}}\delta_{p,q}>0,
\end{equation}
the statement of Theorem \ref{main} holds over
\begin{equation}\label{22040203}
\big\{(p,q)\in \mathbb{Z}^2\;:\;0<q<p\big\} 
\end{equation}
for the example provided. 

However,~\eqref{22040207} is not generally true; in fact, the following holds:
\begin{equation*}
\inf _{\substack{(p,q)\in \mathbb{Z}^2\\0<q<p}}\delta_{p,q}=0.
\end{equation*}
Moreover, for any sufficiently large $D$, we always find $(p,q)$ in~\eqref{22040203}
and roots of~\eqref{22010410} whose absolute values are bigger than $1+\frac{D}{p}$,
which means the above heuristic argument is not enough to establish the claim of
Theorem \ref{main}. This aspect constitutes a highly non-trivial point of the problem.

To resolve the issue, in Lemmas \ref{21013003}-\ref{21011701}, we fix some
sufficiently large $D$ and count the number of roots of~\eqref{22010410}  whose
absolute values are larger than $1+\frac{D}{p}$. We further get the lower and upper
bounds of those roots and examine their distribution within the range. It is then
verified that the product of the root moduli stays in manageable limits, making it
small enough to deduce the desired outcome from Lehmer's conjecture.  

The key technical heart of the paper lies in proving Lemmas
\ref{21013003}-\ref{21011701}, and this requires a thorough examination of the local
geometry of an analytic curve. Employing solely elementary methods such as
trigonometry and basic analysis, we investigate the desired properties of the curve
in the proofs. We particularly encourage readers to contrast the content in this
subsection with the assertions given in Lemma \ref{21011701}.  

Finally let us remark that once the claim of Theorem \ref{main} is demonstrated over
the domain in~\eqref{22040203}, the rest will follow naturally by the symmetric
properties of the $A$-polynomial, along with a change of variables applied to it. 

\subsection{Acknowledgments}
We would like to thank the referee for their careful reading of the paper and for
various useful comments and suggestions.

\section{Preliminaries}

\subsection{The $A$-polynomial}
\label{A-poly}
For a given $1$-cusped hyperbolic $3$-manifold $M$, the $A$-polynomial of
$M$ is a polynomial with $2$-variables introduced by
Cooper--Culler--Gillett--Long--Shalen in \cite{CCGLS}. The subject has been studied
in great detail, as it provides much topological information of $M$. We will not
present all the technical details about the topic in this paper, but will instead
provide a brief outline of its construction and necessary properties that will be
utilized later in proving the main theorem. For a more comprehensive description of
the $A$-polynomial, we suggest readers refer to, for instance, \cite{Cha}
or \cite{CCGLS}.  

According to Thurston \cite{Thurston}, a geometric ideal triangulation $\mathcal{T}$
on $M$ induces a so-called \textit{gluing variety} $G(\mathcal{T})$ of $M$. The
variety represents the required conditions for how the tetrahedra in $\mathcal{T}$
are glued together along their edges to get a hyperbolic structure on $M$. Roughly,
$G(\mathcal{T})$ is seen as the set of all the possible hyperbolic structures on $M$,
or simply, the moduli space of $M$. 

If $T$ is a torus cross-section of the cusp of $M$ and $\mu, \lambda$ are the
chosen meridian-longitude pair of $T$,  then each point of $G(\mathcal{T})$ gives
rise to a (Euclidean) similarity structure on $T$, thus inducing the following
\textit{holonomy map} 
\begin{equation}
\pi_1(T)\longrightarrow \text{Aff}(\mathbb{C}):=\{az+b\;:\;a\neq 0, b\in \mathbb{C}\}.
\end{equation}
Consequently, the dilation components of the holonomies of $\mu, \lambda$ produce
rational functions $m, l$ respectively on $G(\mathcal{T})$. Now the
\textit{$A$-polynomial} of $M$ is defined as the Zariski closure of the image under
\begin{equation*}
G(\mathcal{T})\longrightarrow (m,l).
\end{equation*}
The $A$-polynomial turns out to be independent of the triangulation $\mathcal{T}$,
but depends only on the fundamental group of $M$ as well as the chosen
meridian-longitude pair of $T$. 

The following properties are well-known: 

\begin{theorem}
  \cite{CCGLS}\label{21011606}
  Let $A(m,\ell) \in \BZ[m,\ell]$ be the $A$-polynomial of a $1$-cusped hyperbolic
  $3$-manifold $M$. 
\begin{enumerate}
\item
  $A(m,\ell)=\pm A(m^{-1},\ell^{-1})$ up to powers of $m$ and $l$. 
\item
  $(m,\ell)=(1, 1)$ is a point on $A(m,\ell)=0$, which gives rise to a
  discrete faithful representation of $\pi_1(M)$. 
\end{enumerate}
\end{theorem}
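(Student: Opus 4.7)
For part (1), the plan is to exploit the fact that the peripheral subgroup $\pi_1(T) = \la \mu,\lambda\ra$ is abelian, so for any representation $\rho: \pi_1(M) \to \SL_2(\BC)$ whose character lies on the distinguished component in \eqref{19071901}, the commuting matrices $\rho(\mu)$ and $\rho(\lambda)$ admit simultaneous eigenlines. Assigning to $\rho$ the pair $(m,\ell)$ of eigenvalues of $\rho(\mu),\rho(\lambda)$ on one of the joint eigenlines produces the eigenvalue map $X(M) \dashrightarrow (\BC^*)^2$. This map is canonical only up to the simultaneous Weyl involution $(m,\ell) \mapsto (m^{-1},\ell^{-1})$, since swapping the two joint eigenlines inverts both eigenvalues. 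Consequently the Zariski closure of the image is a plane curve invariant under this involution, and taking $A(m,\ell)$ to be its (primitive, integrally normalized) defining polynomial, $A(m^{-1},\ell^{-1})$ cuts out the same curve. Clearing poles by a monomial $m^a\ell^b$ yields an element of $\BZ[m,\ell]$ that must agree with $A(m,\ell)$ up to a unit of $\BZ[m,\ell]$, i.e.\ up to sign, which is precisely the asserted identity.

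For part (2), I would argue directly from the geometry of the cusp. The complete hyperbolic structure provides a geometric representation $\rho_0: \pi_1(M) \to \PSL_2(\BC)$ whose restriction to $\pi_1(T)$ is a rank-two parabolic subgroup fixing the cusp point on the sphere at infinity; thus $\rho_0(\mu)$ and $\rho_0(\lambda)$ are both parabolic. Lifting $\rho_0$ to $\SL_2(\BC)$ (this lift exists for hyperbolic $3$-manifold groups by a theorem of Culler, and for a one-cusped manifold one can arrange the lift so that peripheral parabolics lift to matrices of trace $+2$), the eigenvalues of both $\rho_0(\mu)$ and $\rho_0(\lambda)$ are $+1$. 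Hence $(m,\ell)=(1,1)$ is attained by the character of the discrete faithful representation, and in particular lies on $\{A=0\}$.

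The main obstacle is not really computational but conceptual, and it sits in part (1): one must verify that the Weyl involution is the \emph{only} source of non-canonicity in the eigenvalue map, and that the image of the geometric component of $X(M)$ under this map is one-dimensional (so that it is cut out by a single polynomial $A$, whose content and leading-coefficient normalization reduce the ambiguity to a sign). Both points are built into the definition of the A-polynomial in \cite{CCGLS} and can be invoked as black boxes here. The sign convention in part (2) also deserves a brief comment: with the standard choice of $\SL_2$-lift for the geometric representation one lands at $(1,1)$ rather than at $(-1,-1)$, matching the normalization under which the statement of the theorem is formulated.
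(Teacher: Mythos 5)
The paper offers no proof of this theorem --- it is quoted from \cite{CCGLS} as a known result --- so there is no internal argument to compare yours against. Your sketch of part (1) is the standard one: the eigenvalue map on the peripheral torus is canonical only up to the Weyl involution $(m,\ell)\mapsto(m^{-1},\ell^{-1})$, so the closure of its image is invariant, and its primitive defining polynomial is therefore determined up to a unit $\pm m^a\ell^b$ of $\BZ[m^{\pm1},\ell^{\pm1}]$. The points you flag as black boxes (one-dimensionality of the image of the relevant components, normalization) are exactly what \cite{CCGLS} supplies, so part (1) is fine as a sketch.

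Part (2) contains a genuine gap: the claim that ``one can arrange the lift so that peripheral parabolics lift to matrices of trace $+2$'' is false in general. The lifts of $\rho_0$ to $\SL_2(\BC)$ form a torsor over $H^1(M;\BZ/2)$, and by the half-lives-half-dies principle the image of $H^1(M;\BZ/2)\to H^1(\partial M;\BZ/2)\cong(\BZ/2)^2$ is only one-dimensional; hence from any one lift only two of the four sign patterns for $\bigl(\tr\tilde\rho_0(\mu),\tr\tilde\rho_0(\lambda)\bigr)$ are reachable, and whether $(+2,+2)$ occurs depends on $M$. For the figure-eight knot it does not: the longitude is null-homologous, so the sign of its trace is independent of the lift and equals $-2$; correspondingly the standard $\SL_2$ A-polynomial $m^4\ell^2-(m^8-m^6-2m^4-m^2+1)\ell+m^4$ restricts at $m=1$ to $(\ell+1)^2$, which vanishes at $\ell=-1$ but not at $\ell=1$. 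So with $\SL_2$-eigenvalue conventions the correct statement is only that \emph{some} point $(\pm1,\pm1)$ lies on the curve. The statement as literally written requires the holonomy-derivative ($\PSL_2$) normalization of $m$ and $\ell$ --- which is what the paper's phrase ``parametrized by the derivatives of the holonomies'' indicates --- and in that setting your lifting argument should simply be replaced by the observation that the derivative of a parabolic M\"obius transformation at its fixed point is $1$, with no choice of lift involved.
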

Writing
\begin{equation}
  \label{21013101}
A(m,\ell)=\sum_{i,j}c_{i,j}m^i\ell^j,
\end{equation}
the Newton polygon $\NT(A)$ of $A(m,\ell)$ is defined as the convex hull
in the plane of the set $\{(i,j)\;:\;c_{i,j}\neq 0\}$.

\begin{theorem}
\cite{CCGLS}\cite{CL}\label{21012801}
Let $M, A(m,\ell)=0$ and $\NT(A)$ be the same as above. Suppose $A(m,\ell)$
is normalized so that the greatest common divisor of the coefficients is $1$. 
\begin{enumerate}
\item
  If $(i,j)$ is a corner of $\NT(A)$, then $c_{i,j}=\pm 1$. 
\item
  For each edge of $\NT(A)$, the corresponding edge-polynomial is a
  product of cyclotomic polynomials.
\end{enumerate}
\end{theorem}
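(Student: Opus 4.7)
The plan is to prove (2) first, via Culler-Shalen theory of ideal points of the $\SL_2(\BC)$-character variety, and then to deduce (1) as a corollary of (2) combined with the $\gcd=1$ normalization. View the A-polynomial curve $C=\{A(m,\ell)=0\}\subset(\BC^*)^2$ as the image under the eigenvalue map of a one-dimensional component $X_0$ of the character variety of $\pi_1(M)$ containing the discrete faithful character, and let $\bar X_0$ denote its smooth projective completion, which adds finitely many ideal points. The key geometric input is that $\NT(A)$ coincides, up to translation, with the Bergman polytope of $C$: each edge $e$ with primitive outward normal $(p,q)$ corresponds to a finite nonempty set of ideal points $\xi\in\bar X_0$ at which the discrete valuation satisfies $(\nu_\xi(m),\nu_\xi(\ell))\in\BR_{>0}\cdot(p,q)$.

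For (2), fix an edge $e$ with primitive outward normal $(p,q)$ and an ideal point $\xi$ above $e$. Culler-Shalen applied to $\nu_\xi$ yields a nontrivial simplicial-tree action of $\pi_1(M)$, hence an essential surface in $M$ with boundary slope $p/q$ on the cusp torus. The peripheral element $\gamma=\mu^p\lambda^q$ is dual to this slope and thus fixes a vertex of the tree, so $\tr\rho(\gamma)$ stays bounded as the character approaches $\xi$. Because $\gamma$ is peripheral, $\rho(\gamma)$ has eigenvalues $\{m^p\ell^q,\,m^{-p}\ell^{-q}\}$, hence the regular function $m^p\ell^q$ extends to $\xi$ with value on the unit circle; repeating the analysis for every Galois conjugate of the character shows $m^p\ell^q(\xi)$ is an algebraic integer all of whose conjugates lie on the unit circle, and so a root of unity by Kronecker's theorem. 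Choosing $(p',q')\in\BZ^2$ with $pq'-qp'=1$ and setting $u=m^p\ell^q$, $v=m^{p'}\ell^{q'}$, the edge polynomial $A_e(u)\in\BZ[u^{\pm 1}]$ is, up to a monomial, the lowest-$v$-degree part of $A$, and its roots in $u$ are exactly the values $u(\xi)$ as $\xi$ ranges over ideal points above $e$. Since these are roots of unity, Kronecker yields $A_e(u)=c\cdot u^{b}\prod_j\Phi_{n_j}(u)$ for some $c\in\BZ$.

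To deduce (1), note that adjacent edges $e_1,e_2$ of $\NT(A)$ share a corner $(i_0,j_0)$, and $c_{i_0,j_0}$ equals the leading coefficient of $A_{e_1}$ and an end coefficient of $A_{e_2}$, which up to sign are the respective scaling constants $c_{e_1}$ and $c_{e_2}$ from paragraph two. Iterating around the boundary of $\NT(A)$ forces all $c_e$ to agree up to sign, so a single integer $c$ divides every boundary coefficient of $A$. If some prime divided $c$, a further argument exploiting the reciprocity $A(m,\ell)=\pm A(m^{-1},\ell^{-1})$ of Theorem~\ref{21011606}(1) together with the cyclotomic structure of edge polynomials extends the divisibility from the boundary to every coefficient of $A$, contradicting $\gcd=1$. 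Hence $c=\pm 1$, simultaneously establishing (1) and refining (2) to $A_e=\pm u^{b}\prod_j\Phi_{n_j}(u)$.

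The main obstacle, and conceptual heart of the proof, is the \emph{bounded-trace-implies-root-of-unity} step in paragraph two: bounded trace alone shows only that $m^p\ell^q(\xi)$ lies on the unit circle as a complex number, and upgrading "on the unit circle" to "root of unity" requires controlling \emph{all} Galois conjugates of the value simultaneously. This control is enabled by the Procesi-Bass integrality of trace coordinates on $X_0$ and by $A\in\BZ[m,\ell]$: together they guarantee that $m^p\ell^q(\xi)$ is an algebraic integer whose Galois conjugates admit parallel Culler-Shalen tree actions, so that Kronecker's theorem applies uniformly. Every other step — tropical identification of edges with ideal points, tree-theoretic extraction of boundary slopes, and the combinatorial propagation of divisibility around $\NT(A)$ — is then a routine consequence of the standard Culler-Shalen machinery.
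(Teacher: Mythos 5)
The paper offers no proof of this statement—it is quoted from \cite{CCGLS} and Cooper--Long \cite{CL}—so your proposal must be judged against those arguments. Your framework (ideal points, Culler--Shalen trees, roots of the edge polynomial as values of $m^p\ell^q$ at ideal points over the edge, Kronecker) is the right one, but the step you yourself identify as the heart of the proof is wrong as written. Boundedness of $\tr\rho(\mu^p\lambda^q)$ at an ideal point says only that the eigenvalue $f=m^p\ell^q$ has a finite nonzero limit there (since $\tr=f+f^{-1}$); it does \emph{not} place that limit on the unit circle (trace $3$ gives eigenvalues $(3\pm\sqrt5)/2$). Nor can "all Galois conjugates on the unit circle" be rescued from integrality plus the reciprocity $A(m,\ell)=\pm A(m^{-1},\ell^{-1})$: a monic reciprocal integer polynomial can have roots off the unit circle with Mahler measure arbitrarily close to $1$—Salem polynomials, i.e.\ exactly the objects Lehmer's conjecture concerns—so Kronecker does not apply. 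The published proof that $f(\xi)$ is a root of unity is homological, not metric: the essential surface $S$ dual to the tree action has all boundary curves of slope $p/q$; $\pi_1(S)$ lies in an edge stabilizer, whose reduction lands in a Borel subgroup of $\SL_2$ of the residue field; composing with the diagonal character gives a homomorphism $\pi_1(S)\to k^*$ that kills commutators, and since the product of the boundary curves of $S$ is a product of commutators while each boundary curve is a power of a conjugate of $\mu^p\lambda^q$, one gets $f(\xi)^N=1$. Without this ingredient your proof of (2) does not close.

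The deduction of (1) is also gapped. Walking around $\pt\NT(A)$ does show that all corner coefficients agree up to sign with one constant $c$, but "$c\neq\pm1$ contradicts $\gcd=1$" is false: a prime can divide every coefficient on the boundary of the Newton polygon while interior coefficients are coprime to it (e.g.\ $3+3m^4+3\ell^4+m\ell$ has coefficient $\gcd$ equal to $1$ yet every boundary coefficient divisible by $3$), and your proposed repair—that reciprocity plus cyclotomicity "extends the divisibility from the boundary to every coefficient"—is unsubstantiated and not a true statement about reciprocal polynomials in general. Cooper--Long's proof of the corner statement is a genuinely separate $p$-adic argument: if a prime $p$ divides a corner coefficient, the lower Newton polytope of $A$ with respect to the $p$-adic valuation of the coefficients acquires a face over that corner, producing a point of the curve over a $p$-adically valued field whose valuation vector $(\nu(m),\nu(\ell))$ lies in the open dual cone of the corner; running Culler--Shalen on this valuation produces an essential surface whose boundary behavior is impossible. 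You need this (or some substitute for it) to obtain (1).
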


\subsection{The Dehn filling polynomial}
\label{sub.dehn}
Fix a $1$-cusped hyperbolic $3$-manifold $M$ with a meridian and longitude
$\mu$ and $\lambda$. Let $M_{p/q}$ denote its $p/q$-Dehn filling where (here and throughout) $(p,q)$ represents a pair of coprime integers. Thurston's hyperbolic Dehn-surgery theorem~\cite{Thurston} implies that $M_{p/q}$ is a hyperbolic
manifold for all but finitely many pairs $(p,q)$, or equivalently for almost
all pairs $(p,q)$, or yet equivalently, for all pairs $(p,q)$ with $|p|+|q|$ large. 
It follows by the Seifert-Van Kampen theorem that
$\pi_1(M_{p/q})=\pi_1(M)/(\mu^p \lambda^q=1)$. Hence if the $A$-polynomial of
$M$ is given by $A(m,\ell)=0$, a discrete faithful representation
$\phi:\pi_1(M_{p/q})\longrightarrow \PSL_2(\mathbb{C})$ is obtained by finding a
point on 
\begin{equation}
  \label{21011801}
\big(A(m,\ell)=0\big)\cap \big(m^p\ell^q=1\big),
\end{equation}
which is reduced to an equation $A_{p,q}(t)=0$ for a one-variable 
Dehn-filling polynomial $A_{p,q}(t)=A(t^{-q}, t^p)$. More precisely, if~\eqref{21013101} is normalized as  
\begin{equation}
  \label{21020202}
A(m,\ell)=\sum\limits_{j=0}^{n} \Big(\sum\limits_{i=a_j}^{b_j} c_{i,j}m^i\Big)\ell^j
\end{equation}
where $a_j, b_j$ and $c_{i,j}$ are integers with $c_{a_j,j}, c_{b_j,j} \neq 0$
for all $j$, then the Dehn-filling polynomial is given by
\begin{equation}
  \label{21013103}
  A_{p,q}(t) =
  \sum\limits_{j=0}^{n} \Big(\sum\limits_{i=a_j}^{b_j} c_{i,j}t^{-qi}\Big)t^{pj}.
\end{equation}


We now discuss some elementary properties of the Dehn-filling polynomial.
Let 
\begin{equation}
  \label{21020203}
S_A:=\max_{0\leq j\leq n-1}\Big\{\frac{a_n-a_j}{n-j}\Big\} 
\end{equation}
denote the largest slope of the Newton polygon of the $A$-polynomial
(see Figure \ref{polygon2}). 
\begin{figure}
\centering
   \includegraphics[width=0.4\linewidth]{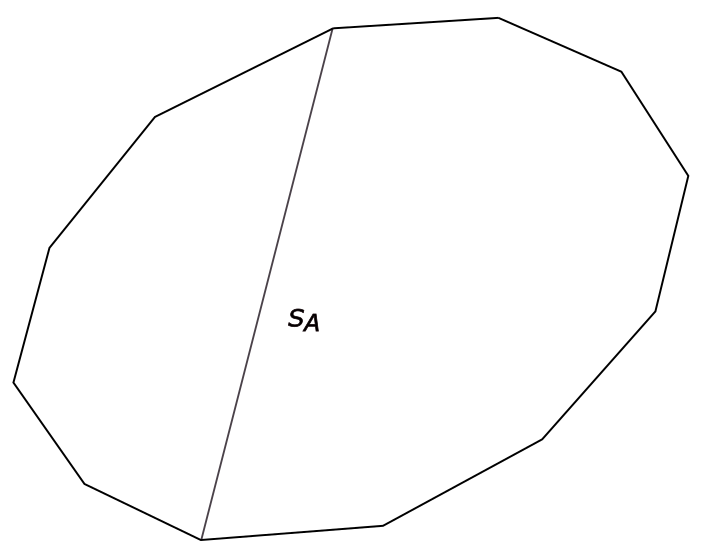}
  \caption{For instance, if the Newton polygon of the $A$-polynomial $M$ appears as above, then $S_A$ is the slope of the depicted edge.}\label{polygon2}
\end{figure}

\begin{lemma}\label{21011905}
When $|p|+|q|$ are large,
\begin{itemize}
\item[(a)]
  no two terms in~\eqref{21013103} are equal,
\item[(b)]
  further, if $q>0$, the leading term of $A_{p,q}(t)$ is of the form
  $c_{a_k,k} t^{-a_k q + k p}$ for some $0 \leq k \leq n$.
  Moreover, $k=n$ for $p/q >S_A$ and $q>0$.
\end{itemize}
\end{lemma}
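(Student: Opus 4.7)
\emph{Proof proposal.} For (a), the plan is to compare monomial exponents directly. Two terms $c_{i,j}t^{pj-qi}$ and $c_{i',j'}t^{pj'-qi'}$ of~\eqref{21013103} coincide in exponent precisely when $p(j-j')=q(i-i')$. Since $|p|+|q|$ large rules out $p=0$ or $q=0$ (the only coprime pairs with $p=0$ or $q=0$ are $(0,\pm1)$ and $(\pm1,0)$, which have $|p|+|q|=1$), both $p$ and $q$ are nonzero, and $\gcd(p,q)=1$ then forces $q\mid(j-j')$ and $p\mid(i-i')$. Letting $W:=\max_j b_j-\min_j a_j$, a constant depending only on $M$, the bounds $|j-j'|\leq n$ and $|i-i'|\leq W$ combined with $|p|+|q|>W+n$ force at least one of $|p|>W$ or $|q|>n$ to hold, and either conclusion yields $(i,j)=(i',j')$.

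For the first assertion of (b), I would note that $q>0$ makes the exponent $pj-qi$ strictly decreasing in $i$ at each fixed $j$, so the maximum over $i\in[a_j,b_j]$ is uniquely attained at $i=a_j$. Maximizing next over $j$, let $k$ attain $\max_{0\leq j\leq n}(pj-qa_j)$; part~(a) ensures $k$ is unique (else two distinct terms would share a monomial), so no cancellation can occur and the leading term is precisely $c_{a_k,k}t^{-a_kq+kp}$.

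For the ``moreover'' clause, the computation
\[
(pn-qa_n)-(pj-qa_j)=q\Bigl((n-j)\tfrac{p}{q}-(a_n-a_j)\Bigr)
\]
shows that this difference is positive, for each $j<n$, if and only if $p/q>(a_n-a_j)/(n-j)$; the hypothesis $p/q>s_A$ together with the definition~\eqref{21020203} of $s_A$ then forces $k=n$.

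The only real subtlety lies in (a), namely handling the asymmetric regimes where one of $|p|,|q|$ is small while the other is large; the symmetric threshold $|p|+|q|>W+n$ dispatches both cases uniformly, so no separate argument is needed.
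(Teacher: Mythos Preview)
Your proof is correct and follows essentially the same strategy as the paper: reduce equality of exponents to the linear relation $p(j-j')=q(i-i')$, argue this has no nontrivial solutions for $|p|+|q|$ large, and then identify the leading term by maximizing $-qi+pj$ first in $i$ (giving $i=a_j$) and then in $j$. The only cosmetic difference is that for (a) the paper observes that $p/q=(i'-i)/(j'-j)$ can assume only finitely many values and hence is avoided for large $|p|+|q|$, whereas you extract an explicit threshold $|p|+|q|>W+n$ via the divisibility $q\mid (j-j')$, $p\mid (i-i')$; for (b) the paper phrases the maximization in Newton-polygon language (the leading exponent is the image of a vertex under $(i,j)\mapsto -qi+pj$), which is exactly your direct computation.
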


It follows that for almost all pairs $(p,q)$, $A_{p,q}(t)$ has degree
piece-wise linear in $(p,q)$, and leading coefficient
(due to Theorem~\ref{21012801}) $\pm 1$. 

\begin{proof}
  Observe that $t^{-qi+pj}=t^{-qi'+pj'}$ implies that $p/q=(i'-i)/(j'-j)$ which takes
  finitely many values according to~\eqref{21020202}. It follows that when $|p|+|q|$
  is sufficiently large, then no two terms in~\eqref{21013103} are
  equal, and hence the Newton polygon of $A_{p,q}(t)$ is the image of the Newton
  polygon of $A(m,\ell)$ under the linear map $\BR^2 \to \BR$ which sends $(i,j)$ to
  $-qi+pj$. This proves that the leading term of $A_{p,q}(t)$ is the image under
  the above map of a corner of $A(m,\ell)$. When $p/q >S_A$ and $q>0$, it is easy to
  see that this corner of $A(m,\ell)$ is $(a_n,n)$.
\end{proof}


We now relate the trace field $K(M_{p/q})$ with~\eqref{21011801}.

\begin{theorem}\label{23083003}
  Let $(m,\ell)=(t_{0}^{-q}, t_{0}^{p})$ be the point on~\eqref{21011801} that gives
  rise to the discrete faithful representation of $\pi_1(M_{p/q})$.  (Of course,
  $t_0$ depends on $p$ and $q$.)  Then there exist constants $C=C(M)$ depending only on $M$ such that 
  \begin{equation}
    \label{21011807}
\frac{1}{C}\deg t_0\leq \deg K(M_{p/q})\leq C\deg t_0.
\end{equation}
\end{theorem}

\begin{proof}
  First note that $K(M)$ is generated by
  the traces of products of at most three generators of $\pi_1(M)$ (see for instance~\cite{MR}). By Theorem 3.1
  in \cite{dunfield}, the trace of any generator of $\pi_1(M)$ is represented as an
  algebraic function of the traces of $\mu$ and $\lambda$. As 
\begin{equation*}
  \tr\; \mu=\pm\Big(\sqrt{m}+\frac{1}{\sqrt{m}}\Big) \quad\text{and}
  \quad  \tr\;\lambda=\pm\Big(\sqrt{l}+\frac{1}{\sqrt{l}}\Big), 
\end{equation*}
it follows that the trace of any $\pi_1(M)$ is an algebraic function of $m$ and
$\ell$. Since $\pi_1(M)$ is finitely generated, the result follows.
\end{proof}

It is well-known (for example, see~\cite{Granville}) in
number theory that $A_{p,q}(t)$ has a bounded number of cyclotomic factors
for any $p$ and $q$ (although of unknown multiplicity). Therefore, thanks to
Theorem~\ref{23083003}, proving Theorem~\ref{main} is reduced to showing the degree
of any non-cyclotomic factor of $A_{p,q}(t)$ is bounded both above and below by some
multiples of $\max\{|p|,|q|\}$ under Lehmer's conjecture. That is, Theorem~\ref{main}
is derived from the following:

\begin{theorem}
  \label{21011903}
  Fix $M$ as in Theorem~\ref{main}. Assuming Lehmer's conjecture, there exist
  $C=C(M)$ depending only on $M$ such that, for any
  non-cyclotomic integer irreducible factor $g(t)$ of $A_{p,q}(t)$, 
  \begin{equation}
    \label{gbounds}
\frac{1}{C}\max\{|p|, |q|\}\leq \deg g(t)\leq C\max\{|p|, |q|\}.
\end{equation}
\end{theorem}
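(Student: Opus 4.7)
The plan is to split the argument into the upper and lower bounds. The upper bound is essentially free: since $g(t)\mid A_{p,q}(t)$ in $\BZ[t]$ we have $\deg g\leq\deg A_{p,q}$, and the proof of Lemma~\ref{21011905} identifies the Newton polygon of $A_{p,q}$ as the image of $\NT(A)$ under the linear map $(i,j)\mapsto -qi+pj$; its length is at most $(\mathrm{diam}\,\NT(A))\cdot\max\{|p|,|q|\}$, giving the constant $C_2=C_2(M)$.

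For the lower bound I would combine a uniform Mahler-measure bound with Lehmer's conjecture. Lemma~\ref{21011905}(a) says the exponents $-qi+pj$ in~\eqref{21013103} are pairwise distinct once $|p|+|q|$ is large enough, so the nonzero coefficients of $A_{p,q}(t)$ are simply a rearrangement of $\{c_{i,j}\}$; hence $\|A_{p,q}\|_1=\|A\|_1=:C_M$ is independent of $(p,q)$, and the classical inequality $M(P)\leq\|P\|_1$ yields $M(A_{p,q})\leq C_M$. The symmetry in Theorem~\ref{21011606}(1) makes $A_{p,q}$ reciprocal, so every root $t_0$ of $A_{p,q}$ lies in the annulus $C_M^{-1}\leq|t_0|\leq C_M$. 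Applying Lehmer to $g$ gives $\log M(g)=\sum_\sigma\log^+|\sigma(t_0)|\geq\log\lambda_0$, where the sum runs over the $d=\deg g$ Galois conjugates of a chosen root $t_0$ of $g$. The whole problem then reduces to the uniform estimate
\begin{equation*}
\bigl|\log|\sigma(t_0)|\bigr|\;\leq\;\frac{C_3}{\max\{|p|,|q|\}}\quad(\text{for every }\sigma)
\end{equation*}
for some $C_3=C_3(M)$: given this, Lehmer yields $\log\lambda_0\leq d\cdot C_3/\max\{|p|,|q|\}$, hence $d\geq(\log\lambda_0/C_3)\max\{|p|,|q|\}$, as desired.

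Proving this estimate is where I expect the main obstacle to lie. The point $(m_0,\ell_0):=(\sigma(t_0)^{-q},\sigma(t_0)^p)$ lies on $A(m,\ell)=0$ with $|m_0|^p|\ell_0|^q=1$, so $(\log|m_0|,\log|\ell_0|)$ sits in the intersection of the amoeba of $A$ with the line $L_{p,q}:px+qy=0$. This amoeba decomposes into a bounded core together with tentacles escaping to infinity in the outward-normal directions of the edges of $\NT(A)$; intersections in the core give the estimate immediately via the identity $(|p|+|q|)\bigl|\log|\sigma(t_0)|\bigr|=\bigl|\log|m_0|\bigr|+\bigl|\log|\ell_0|\bigr|$. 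To handle tentacle intersections I would use Theorem~\ref{21012801}(2) decisively: in the tentacle of an edge $e$, comparing the edge terms (which by Theorem~\ref{21012801}(2) form a cyclotomic polynomial in the edge monomial $M_e:=m^{p_e}\ell^{q_e}$, where $(p_e,q_e)$ is the primitive edge direction) against the lower-order terms forces any solution of $A=0$ to satisfy $M_e$ exponentially close to some root of unity. Combined with $m^p\ell^q=1$ and the hypothesis that $(p,q)$ is not parallel to $(p_e,q_e)$ (which excludes only a finite set of directions), these two cyclotomic-type relations cut out a finite set of torsion points in $\BC^*\times\BC^*$. A Laurent/Mordell--Lang-type argument applied to the curve $A$---which, being the geometric component of the $A$-polynomial of a hyperbolic manifold, is not a coset of any proper subtorus and hence has only finitely many torsion points by Laurent's theorem---should then force the actual algebraic intersections in the tentacle to be exactly torsion, contradicting the non-cyclotomicity of $g$. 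Thus the Galois conjugates of $t_0$ contribute only core intersections, the estimate follows, and the lower bound is established.
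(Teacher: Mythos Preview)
Your upper bound is correct and matches the paper. Your overall lower-bound strategy---bound $\mathscr{M}(g)$ from above and invoke Lehmer---is also the paper's. The gap is the uniform per-root estimate
\[
\bigl|\log|\sigma(t_0)|\bigr|\le \frac{C_3}{\max\{|p|,|q|\}},
\]
which is false in general. Take $q=1$, $p$ large, and look at roots $t_0$ with $m_0=t_0^{-1}$ near a zero $\zeta$ of the top edge polynomial $\sum_i c_{i,n}m^{i}$ (a root of unity by Theorem~\ref{21012801}). After the change of variables carried out in Lemma~\ref{21011701} one is solving $z\,\phi(z)^p=1$ with $\phi(0)=1$; the relevant solutions satisfy a Lambert-type equation and have $|\phi(z)|=|t_0|\approx 1+d\log p/p$ (this is exactly the content of Lemma~\ref{21013003}). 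So $\bigl|\log|t_0|\bigr|\asymp \log p/p$, not $O(1/p)$, and feeding this into your Lehmer inequality gives only $\deg g\gtrsim p/\log p$, short of the linear bound.

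Your Laurent/Mordell--Lang rescue does not close this. In a tentacle you correctly get that $M_e(m_0,\ell_0)$ is \emph{close} to a root of unity, but the intersection point $(m_0,\ell_0)$ is a genuine point of the curve $A=0$ and there is no mechanism forcing it to be torsion; Laurent's theorem bounds torsion points \emph{on} $A=0$, not points merely near torsion. Lemma~\ref{21013003} in fact counts these tentacle intersections and shows they are present (and generically non-torsion), so non-cyclotomicity of $g$ cannot exclude them.

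What the paper does instead is drop the per-root bound and control the \emph{product} of the $\deg g$ largest-modulus roots of $A_{p,q}$, which dominates $\mathscr{M}(g)$ since $g$ is monic. The point is a decay in $l$: the $l$-th batch of $2q$ largest roots has modulus at most $1+d\log\!\big((p/q)/l\big)\big/(p/q)$ (Lemma~\ref{21011701}(3)), and summing the logarithms over $l\le c\cdot p/q$ gives $O(-c\log c)\to 0$ as $c\to 0$ (the computation~\eqref{21013111}--\eqref{21013112}). Thus $\deg g=o(p)$ would force $\mathscr{M}(g)\to 1$, contradicting Lehmer. The full range of slopes $p/q$ is then covered by $\SL_2(\BZ)$ changes of the meridian--longitude basis, one sector per edge of $\NT(A)$ (Theorem~\ref{2101280} and the final proof of Theorem~\ref{21011903}). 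Your amoeba-core observation is morally the ``bounded $p/q$'' case (Lemma~\ref{21012208}(b)); it is the tentacle/unbounded-slope regime that needs the finer product estimate rather than a uniform root bound.
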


The upper bound follows trivially from~\eqref{21013103}, and can
even be replaced by a piece-wise linear function of $(p,q)$, as
was commented after Lemma~\ref{21011905}. The difficult part is the
lower bound, and this requires an analysis of the roots of $A_{p,q}(t)$
near $1$, along with careful estimates, as well as Lehmer's conjecture.

\begin{remark}
  In some special circumstances, such as Dehn-filling on one cusp of the Whitehead
  link complement $W$, it is possible to prove unconditionally that the degree of
$K(W_{1/q})$ is given by an explicit piece-wise linear function
of $q$ (see Hoste--Shanahan~\cite{HS:twist}). More generally, one
can prove that the degree of $K(W_{p/q})$ for fixed $p$ and large $q$
is given by an explicit piece-wise linear function of $q$~\cite{FGH}.   
\end{remark}

\subsection{Mahler measure and Lehmer's conjecture}
\label{19111503}

The Mahler measure $\mathscr{M}(f)$ and length $\mathscr{L}(f)$ of an integer polynomial 
\begin{equation*}
f(x)=a_nx^n+\cdots+a_1x+a_0=a_n(x-\alpha_1)\cdots(x-\alpha_n) \in \BZ[x]
\end{equation*}
are defined by 
\begin{equation*}
\begin{gathered}
\mathscr{M}(f)=|a_n|\prod^n_{i=1}\text{max}(|\alpha_i|, 1) \quad \text{and}\quad \mathscr{L}(f)=|a_0|+\cdots+|a_n|
\end{gathered}
\end{equation*}
respectively. Then the following properties are standard \cite{Mahler}:
\begin{enumerate}
\item
  $\mathscr{M}(f_1f_2)=\mathscr{M}(f_1)\mathscr{M}(f_2)$
\item
  $\mathscr{M}(f)\leq \mathscr{L}(f)$ 
\end{enumerate}
where $f_1, f_2\in\BZ[x]$. One of the renowned unsolved
problems in number theory is the following conjecture proposed by D. Lehmer in 1930s:
\begin{conjecture}[D. Lehmer]\label{Lehmer}
The Mahler measure of any non-cyclotomic irreducible integer polynomial $f$ satisfies 
\begin{equation*}
\mathscr{M}(f)\geq 1.176280818\dots
\end{equation*}
where $1.176280818\dots$ is the Mahler measure of 
\begin{equation*}
x^{10}+x^9-x^7-x^6-x^5-x^4-x^3+x+1
\end{equation*}
and the smallest known Salem number.
\end{conjecture}

\subsection{Schinzel-Zassenhaus conjecture}\label{dimitrov}
Recently there has been some progress towards Lehmer's conjecture. In \cite{Dim},
V. Dimitrov proved the following theorem:
\begin{theorem}[Schinzel-Zassenhaus conjecture]\label{dim}
  Let $f(t)$ be a monic integer irreducible polynomial of degree $n > 1$. If $f(t)$
  is not cyclotomic, then 
\begin{equation*}
  \max_{t_0\;:\;f(t_0)=0} |t_0|\geq 2^{\frac{1}{4n}} =1+\frac{\log 2}{4n}
  +O\Big(\frac{1}{n^2}\Big).
\end{equation*}
\end{theorem}

As mentioned in Section \ref{key}, once we restrict our attention to a small domain
such as~\eqref{22033003}, the conclusion of Theorem~\ref{main} follows
unconditionally (i.e. without relying on Lehmer's conjecture) over the domain solely
based on the above theorem (see Theorem \ref{22010601}). However we do not believe
Theorem~\ref{dim} in general provides an unconditional proof of Theorem~\ref{main}
over the entire domain. Please also refer to Remark~\ref{NZ}.


\section{Bounds for the roots of the Dehn-filling polynomial}
\label{sec.moduli}

In this section we study the roots of the Dehn-filling polynomial $A_{p,q}(t)$
for sufficiently large $|p|+|q|$. Recall the constant $S_A$ from~\eqref{21020203}.
The next lemma shows that when $p/q>S_A$ with $p$ and $q$ sufficiently
large, the roots of $A_{p,q}(t)$ are near the unit circle in the
complex plane.

\begin{lemma}
\label{21012208}
\rm{(a)} Fix a positive constant $S_1$ with $S_1>S_A$. There exists $D>0$
  that depends on $S_1$ such that for any coprime pair
  $(p,q)\in \mathbb{N}^2$ satisfying $\frac{p}{q}>S_1$, and for any root
  $t_0$ of $A_{p,q}(t)$, we have
\begin{equation*}
|t_0|< 1+\frac{D}{q}  \,.
\end{equation*}
\rm{(b)} If in addition $S_2 > S_1$, $S_1<\frac{p}{q}<S_2$, and $t_0$
is any root of $A_{p,q}(t)$, then
\begin{equation*}
|t_0|<1+\frac{D}{p} 
\end{equation*}
for some $D$ that depends on $S_1$ and $S_2$ only.
\end{lemma}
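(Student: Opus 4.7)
The plan is to use the explicit leading term of $A_{p,q}(t)$ to derive a Cauchy-type bound on its roots. From~\eqref{21013103} and the definition~\eqref{21020203} of $s_A$, the hypothesis $p/q > C_1 > s_A$ with $q>0$ implies that the exponent $-qi+pj$ is uniquely maximized over the support of $A$ at $(i,j)=(a_n,n)$; hence the leading term of $A_{p,q}(t)$ is $c_{a_n,n}\,t^{N}$ with $N = -a_n q + np$, and $|c_{a_n,n}|=1$ by Theorem~\ref{21012801}(1) (since $(a_n,n)$ is a corner of $\NT(A)$).

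Set $\delta_{i,j} := q(i-a_n)+p(n-j)$, so that $\delta_{i,j}>0$ for every $(i,j)\neq(a_n,n)$ in the support. Dividing $A_{p,q}(t_0)=0$ by the leading monomial gives
\begin{equation*}
1 \;=\; -\sum_{(i,j)\neq(a_n,n)} \frac{c_{i,j}}{c_{a_n,n}}\, t_0^{-\delta_{i,j}}.
\end{equation*}
The key step is a uniform lower bound $\delta_{i,j} \geq \beta q$ with $\beta = \beta(C_1) > 0$. I would argue by cases. For the top row ($j = n$, $i > a_n$), $\delta_{i,n} = q(i-a_n) \geq q$ directly. For $j < n$, the definition~\eqref{21020203} of $s_A$ yields $a_n-a_j \leq s_A(n-j)$, so
\begin{equation*}
\delta_{i,j} \;\geq\; q(a_j-a_n)+p(n-j) \;\geq\; (p-q s_A)(n-j) \;\geq\; q(C_1-s_A),
\end{equation*}
using $p > C_1 q$ and $n-j\geq 1$. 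Hence $\beta := \min\{1,\, C_1-s_A\}>0$ works in both cases.

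Now write $R := |t_0|$. Assuming $R \geq 1$ (otherwise the bound is trivial), taking absolute values above and using $R^{-\delta_{i,j}} \leq R^{-\beta q}$ gives
\begin{equation*}
1 \;\leq\; \sum_{(i,j)\neq(a_n,n)} |c_{i,j}|\, R^{-\delta_{i,j}} \;\leq\; \mathscr{L}(A)\, R^{-\beta q},
\end{equation*}
so $R \leq \mathscr{L}(A)^{1/(\beta q)}$. An elementary check shows that $q\bigl(\mathscr{L}(A)^{1/(\beta q)}-1\bigr)$ is monotone decreasing in $q \geq 1$ with finite limit $(\log\mathscr{L}(A))/\beta$, hence bounded, yielding $R \leq 1+D/q$ for a constant $D=D(C_1,M)$; this proves~(a). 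For~(b), the additional hypothesis $p/q < C_2$ gives $q > p/C_2$, so (a) implies $|t_0| \leq 1 + D C_2 / p$, i.e., $D' = D C_2$ works. The main obstacle is the uniform bound $\delta_{i,j} \geq \beta q$, whose validity depends crucially on the strict inequality $C_1 > s_A$; once this is in hand, the rest is standard Cauchy-style root estimation.
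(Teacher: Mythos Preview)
Your proof is correct and follows essentially the same approach as the paper: both identify the leading term $c_{a_n,n}t^{-a_nq+np}$ via Lemma~\ref{21011905}, then bound the exponent gap $\delta_{i,j}$ from below by a constant multiple of $q$ (splitting into the cases $j=n$ and $j<n$, the latter using $C_1>s_A$) to obtain a Cauchy-type root bound $|t_0|^q \leq \text{const}$. The only cosmetic difference is that the paper compares the leading term with the single term of second-largest exponent (yielding the constant $CL$), whereas you sum over all remaining terms (yielding $\mathscr{L}(A)$); and you make the passage from $R^{\beta q}\leq \mathscr{L}(A)$ to $R\leq 1+D/q$ explicit, which the paper leaves implicit.
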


\begin{proof}
  For $S_1>S_A$, the leading term of $A_{p,q}(t)$ is
  $c_{a_n, n}t^{-a_nq+np}$ and $c_{a_n,n}=\pm 1$ by Lemma~\ref{21011905}.
    Thus
\begin{equation}
\label{20091102}
|t_0^{-a_nq+np}|< Lc_{\text{max}}|t_0^{-aq+bp}|\Longrightarrow |t_0^{(n-b)p+(a-a_n)q}|< Lc_{\text{max}}
\end{equation}
where $L$ is the number of terms of $A_{p,q}(t)$, $c_{\text{max}}$ is the maximum among all the coefficients of $A_{p,q}(t)$ and $-aq+bp$ is the second largest exponent of $A_{p,q}(t)$. Note that $L$ and $c_{\text{max}}$ are independent of $p$ and $q$. 

We now consider two cases: $b=n$ and $b<n$. If $b=n$, then $|t_0^q|<Lc_{\text{max}}$,
implying
\begin{equation}
  \label{21022502}
|t_0|<1+\frac{D}{q}
\end{equation}
for some constant $D$ depending on $L$ and $c_{\text{max}}$.

If $b < n$, since $p,q>0$, Lemma~\ref{21011905} implies that
$(a,b)=(a_j,j)$ for some $0\leq j\leq n-1$. Thus  
\begin{equation*}
\begin{gathered}
  Lc_{\text{max}}>\big|t_0^{(n-j)p+(a_i-a_n)q}\big|=
  \Big|t_0^{(n-j)\big(p-\frac{a_n-a_j}{n-j}q\big)}\Big|>
  \Big|t_0^{(n-j)\big(S_1-\frac{a_n-a_j}{n-j}\big)q}\Big|
\end{gathered}
\end{equation*}
by~\eqref{20091102}. By the assumption, as $S_1$ is strictly larger than
$\frac{a_n-a_j}{n-j}$ for every $j$, we get
\begin{equation*}
|t_0^q|< D_1\Longrightarrow |t_0|<1+\frac{D}{q}
\end{equation*}
for some constant $D_1$ and $D$ that depend only on $S_1$. Part (b) follows easily from part (a).
\end{proof}

The next lemma is a model for the roots of the Dehn-filling polynomial outside
  the unit circle. Indeed, as we will see later (in the proof of
  Lemma~\ref{21011701}), after a change of variables, we will
  bring the equation $A_{p,q}(t)=0$ into the form $z^q \phi(z)^p=1$ where $\phi$ is an analytic function at $z=0$ with $\phi(0)=1$. Hence, the lemma below
is the key technical tool used to bound the roots of the
Dehn-filling polynomial. In a simplified form, note that the equation $z^q(1+z)^p=1$
has $p+q$ solutions in the complex plane for $p, q >0$. On the other hand, only a
fraction of them are near zero whereas at the same time $1+z$ is outside the unit
circle. Moreover, we have a bound for the size of such solutions. 

\begin{lemma}
  \label{21013003}
  Let $w=\phi(z)$ be an analytic function defined near $(z,w)=(0,1)$ and $\epsilon$ be a sufficiently small number. Then there exists $\gamma(\epsilon)>0$ such that, for
  every coprime pair $(p,q)\in \mathbb{N}^2$ with $p/q>\frac{1}{\epsilon}$, the
  number of $(z,w)$ satisfying 
  \begin{equation}
    \label{21012209}
z^qw^p=1, \quad |w|>1, \quad |w-1|<\epsilon
\end{equation}
is at most $2\big(\ceil[\Big]{\frac{\gamma(\epsilon)p}{2\pi q}}+1\big)q$.
Moreover, $\gamma(\epsilon)\rightarrow 0$
as $\epsilon\rightarrow 0$ and, for each $h$ ($1\leq h\leq
\ceil[\Big]{\frac{\gamma(\epsilon)p}{2\pi q}}+1$), the product of the moduli of the first $2hq$ largest $w$ satisfying~\eqref{21012209} is bounded above by 
\begin{equation}
  \label{rootbound}
\prod\limits_{l=1}^{h} \bigg(1+\frac{d \log\frac{p/q}{l}}{p/q}\bigg)^{2q}
\end{equation}
where $d$ is some constant depending only on $\phi$. 
\end{lemma}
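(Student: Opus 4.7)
The plan is to analyze the roots of $\psi(w)^q w^p = 1$ -- where $\psi = \phi^{-1}$ is the local inverse of $\phi$ near $w=1$ -- by reducing to a leading-order transcendental equation solvable via Lambert's $W$-function, and then to exploit a monotone relation between $|u| := |w-1|$ and $|w|$ forced directly by the equation.

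\emph{Reduction.} Assuming $\phi'(0) \neq 0$ (the degenerate case is analogous after a Weierstrass-type local model), the inverse $\psi$ is analytic near $w = 1$ with $\psi(1)=0$ and $\psi(w) = b_1(w-1) + O((w-1)^2)$, where $b_1 = 1/\phi'(0)$. The system \eqref{21012209} becomes $\psi(w)^q w^p = 1$; writing $u := w-1$ and $P := p/q$, and taking logarithms, one obtains
\[
\log u + P u \;=\; \lambda_N + O(u) + O(P u^2), \qquad \lambda_N := \frac{2\pi i N}{q} - \log b_1, \qquad N \in \mathbb{Z}.
\]

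\emph{Counting via Lambert $W$.} The leading-order equation $\log u + P u = \lambda_N$ has solutions $u_{N,k} = W_k(P e^{\lambda_N})/P$, indexed by $N \in \{0,\dots,q-1\}$ and by the branch $k \in \mathbb{Z}$ of Lambert $W$. Using the standard asymptotic $W_k(z) = L_k - \log L_k + O(\log L_k / L_k)$ with $L_k = \log z + 2\pi i k$, and writing $m := qk+N$, one finds
\[
|u_{N,k}| \;\approx\; \frac{\sqrt{(\log P)^2 + (2\pi m/q)^2}}{P}.
\]
The constraint $|u|<\epsilon$ thus becomes $|m|\le M := \lceil \epsilon p / (2\pi)\rceil$ up to lower-order corrections, and the constraint $|w|>1$ is automatic in this regime because $|w|^p = |\psi(w)|^{-q}$ with $|\psi(w)| \approx |b_1||u| < 1$. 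The number of admissible pairs $(N,k)$ is therefore at most $2M+1 \le 2\lceil C(\epsilon) p/(2\pi q)\rceil q$ for $C(\epsilon) = \epsilon + o_{\epsilon \to 0}(1)$.

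\emph{Product bound via monotonicity.} Taking absolute values in $\psi(w)^q w^p = 1$ yields the exact relation
\[
\log |w| \;=\; -\frac{1}{P}\log |\psi(w)| \;\approx\; \frac{1}{P}\log\frac{1}{|b_1|\,|u|},
\]
so $|w|$ is a \emph{strictly decreasing} function of $|u|$. Hence the largest values of $|w|$ correspond to the \emph{smallest} values of $|u|$, i.e. to the smallest $|m|$. After ordering the admissible solutions in decreasing $|w|$ and grouping them into consecutive packets of $2q$ indexed by $l = 1,\dots,k$, the $l$-th packet satisfies $|m| \lesssim lq$, hence $|u| \gtrsim \max(\log P,\, 2\pi l)/P$, and therefore
\[
|w| \;\le\; 1 + \frac{d\,\log(P/l)}{P}
\]
for a constant $d$ depending only on $\phi$. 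Multiplying over $l=1,\dots,k$ reproduces exactly the stated product bound.

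\emph{Rigor and main obstacle.} To upgrade the Lambert-$W$ heuristic to a genuine count of zeros of $\psi(w)^q w^p - 1$, one would apply Rouché's theorem on small discs around each leading solution $u_{N,k}$, where the remainder $O(u) + O(Pu^2)$ in the equation is strictly dominated by the derivative $1/u + P$ of the leading symbol $\log u + P u$. The chief technical obstacle is the outer regime $|u| \sim \epsilon$: there $P u^2 \sim P \epsilon^2$ need not be small, which is precisely why $C(\epsilon)$ must tend to $0$ as $\epsilon \to 0$ -- one restricts to $|m| \le C(\epsilon)\, p/(2\pi)$ rather than the naive $\epsilon p/(2\pi)$ so that $Pu^2$ stays controlled on the annuli where the Rouché estimates, and hence the size bounds, remain valid.
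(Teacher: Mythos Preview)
Your approach is correct in outline and genuinely different from the paper's. The paper works in polar coordinates: it writes $w=ae^{i\theta_1}$, $z=be^{i\theta_2}$, uses the law of cosines on the triangle with vertices $0,z,1+z$ to express $\cos\theta_1,\cos\theta_2$ in terms of $a$ (after substituting $b=a^{-r}$, $a=1+x/r$), and then shows by direct computation that $\theta_1(x)$ and $\theta_2(x)$ are monotone decreasing on the relevant $x$-interval. The Lambert equation $r=xe^x$ emerges only at the end, as the condition $\sin\theta_1=0$ marking the boundary of the range. Counting then comes from the intermediate value theorem applied to the monotone function $r\theta_1(x)+\theta_2(x)$, and the size bound $x_l^h\le\log(r/l)$ is read off from the same monotonicity. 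You instead invert $\phi$, take logarithms, and land immediately on $\log u+Pu=\lambda_N$, which you solve with Lambert $W$; your product bound comes from the exact modular relation $\log|w|=-\tfrac{1}{P}\log|\psi(w)|$ rather than from tracking the angle functions.

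What each approach buys: the paper's angle-monotonicity argument gives the count rigorously without any perturbation step, since a monotone real function crosses each integer multiple of $2\pi$ exactly once; your route is shorter and more conceptual but leaves the Rouch\'e step as a sketch, which you correctly flag. One small point to tighten: your claim that ``$|w|$ is a strictly decreasing function of $|u|$'' is only true to leading order, since $|\psi(w)|=|b_1|\,|u|\,|1+O(u)|$ depends on $\arg u$ as well; the resulting $O(|u|)$ ambiguity in the ordering is harmless for the final product bound but should be stated as an inequality up to a constant factor in $d$, not as strict monotonicity.
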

\begin{proof}
\begin{enumerate}
\item We first prove the lemma in the special case of $w=\phi(z)=1+z$. The first
  equation in~\eqref{21012209} is equivalent to 
\begin{equation}\label{210114003}
(1+z)^{p/q}=\frac{e^{2\pi ik/q}}{z}
\end{equation}
where $0\leq k\leq q-1$. 

\begin{figure}
\centering
\includegraphics[width=0.6\textwidth]{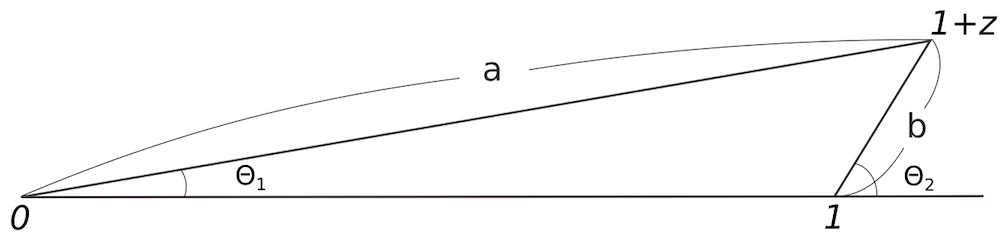}
\caption{$\bigtriangleup(z)$}\label{23091102}
\end{figure}
Considering $0, 1, 1+z$ as three vertices of a triangle $\bigtriangleup(z)$ in the
complex plane, we denote $1+z$ and $z$ by 
\begin{equation}\label{22040903}
\begin{gathered}
ae^{i\theta_1}:=1+z, \quad be^{i\theta_2}:=z 
\end{gathered}
\end{equation}
respectively where $a,b>0$ and $-\pi<\theta_1, \theta_2<\pi$
(Figure \ref{23091102}).\footnote{Note that if $\theta_1>0$ (resp. $\theta_1<0$),
  then $\theta_2>0$ (resp. $\theta_2<0$).}  Then~\eqref{210114003}
is equivalent to 
\begin{equation}\label{21012803}
(ae^{i\theta_1})^{p/q}=\frac{e^{2\pi ik/q}}{be^{i\theta_2}}\quad\Longrightarrow\quad  a^{p/q}=b^{-1},  \;\;\frac{\theta_1p}{q}\equiv \frac{2\pi k}{q}-\theta_2
  \quad (\noindent\mod 2\pi).
\end{equation}
By trigonometry, we have:
\begin{equation*}
\begin{gathered}
b^2=a^2+1-2a\cos \theta_1, \quad a^2=b^2+1+2b\cos \theta_2.
\end{gathered}
\end{equation*}
Since $b=a^{-p/q}$ by~\eqref{21012803}, it follows that 
\begin{equation}\label{210114006}
\begin{gathered}
  \cos \theta_1=\frac{1}{2a}(a^2+1-a^{-\frac{2p}{q}})
  =\frac{1}{2}\Big(a+\frac{1}{a}-\frac{1}{a^{\frac{2p}{q}+1}}\Big),\\
  \cos \theta_2=\frac{1}{2b}(a^2-b^2-1)
  =\frac{a^{\frac{p}{q}}}{2}(a^2-a^{\frac{-2p}{q}}-1)
  =\frac{1}{2}\Big(a^{\frac{p}{q}+2}-a^{\frac{-p}{q}}-a^{\frac{p}{q}}\Big).
\end{gathered}
\end{equation}
To simplify notation, we let\footnote{Since $|w|=a>1$ and $|z|=|ae^{i\theta_1}-1|<\epsilon$ by the assumption, we suppose $x>0$ and $\frac{x}{r}$ is sufficiently small.}  
\begin{equation}\label{22040905}
r:=p/q \quad \text{and}\quad  x:=r(a-1),
\end{equation}
and rewrite the equations
in~\eqref{210114006} as 
\begin{equation}\label{22040801}
\begin{gathered}
  \cos \theta_1=\frac{1}{2}\Bigg(1+\frac{x}{r}
  +\frac{1}{1+\frac{x}{r}}-\frac{1}{\big(1+\frac{x}{r}\big)^{2r+1}}\Bigg)
  =\frac{1}{2}\Bigg(2+\frac{x}{r}-\frac{\frac{x}{r}}{1+\frac{x}{r}}
  -\frac{1}{\big(1+\frac{x}{r}\big)^{2r+1}}\Bigg),\\
  \cos \theta_2=\frac{1}{2}\Bigg(\Big(1+\frac{x}{r}\Big)^{r+2}
  -\frac{1}{\big(1+\frac{x}{r}\big)^{r}}-\Big(1+\frac{x}{r}\Big)^{r}\Bigg),
\end{gathered}
\end{equation}
which implies 
\begin{equation*}
\begin{gathered}
  \sin^2\theta_1=1-\cos^2\theta_1=1-\Bigg(1+\frac{1}{2}\bigg(\frac{x}{r}
  -\frac{\frac{x}{r}}{1+\frac{x}{r}}-\frac{1}{\big(1
    +\frac{x}{r}\big)^{2r+1}}\bigg)\Bigg)^2\\
  =-\bigg(\frac{x}{r}-\frac{\frac{x}{r}}{1+\frac{x}{r}}
  -\frac{1}{\big(1+\frac{x}{r}\big)^{2r+1}}\bigg)
  -\frac{1}{4}\bigg(\frac{x}{r}-\frac{\frac{x}{r}}{1+\frac{x}{r}}
  -\frac{1}{\big(1+\frac{x}{r}\big)^{2r+1}}\bigg)^2\\
\Longrightarrow  
\sin \theta_1=\pm \sqrt{\bigg(\frac{1}{\big(1+\frac{x}{r}\big)^{2r+1}}
  +\frac{\frac{x}{r}}{1+\frac{x}{r}}-\frac{x}{r}\bigg)
    -\frac{1}{4}\bigg(\frac{1}{\big(1+\frac{x}{r}\big)^{2r+1}}
    +\frac{\frac{x}{r}}{1+\frac{x}{r}}
    -\frac{x}{r}\bigg)^2}.
\end{gathered}
\end{equation*}

Considering $\theta_1=\theta_1(x)$ as a function of $x$ and assuming
$\theta_1(x)\geq 0$ for the sake of simplicity, we briefly go over some basic behavior of the above function, set up the domain of
$x$ satisfying $|ae^{i\theta_1(x)}-1|=|z|<\epsilon$ and then list all the solutions
to the equation $z^q(1+z)^p=1$ over the domain found. 

Note that the following  
\begin{equation*}
\begin{gathered}
  \sqrt{\frac{1}{\big(1+\frac{x}{r}\big)^{2r+1}}
  +\frac{\frac{x}{r}}{1+\frac{x}{r}}
    -\frac{x}{r}}
  =\sqrt{\frac{1}{\big(1+\frac{x}{r}\big)^{2r+1}}
    -\frac{1}{\frac{r}{x}\big(\frac{r}{x}+1\big)}}
=\sqrt{\frac{1}{\big(1+\frac{x}{r}\big)^{2r+1}}-\frac{x^2}{r(x+r)}} 
\end{gathered}
\end{equation*}
is a decreasing function of $x$ and is $1$ when $x=0$. Further, if 
\begin{equation*}
\begin{gathered}
\frac{1}{(1+\frac{x}{r})^{2r+1}}=\frac{x^2}{r(x+r)},
\end{gathered}
\end{equation*}
then
\begin{equation}\label{21011304}
\begin{gathered}
\Big(\frac{r}{x+r}\Big)^{2r+1}=\frac{x^2}{r(x+r)}\Rightarrow
x^2=\frac{r^{2r+2}}{(x+r)^{2r}}
\Rightarrow x=\frac{r^{r+1}}{(x+r)^{r}}
\Rightarrow r=x\Big(1+\frac{x}{r}\Big)^r.
\end{gathered}
\end{equation}
And as $\Big(1+\frac{x}{r}\Big)^{\frac{r}{x}}$ is approximately equal to $e$
(simply say $\Big(1+\frac{x}{r}\Big)^{\frac{r}{x}}\approx e$) for $\frac{x}{r}$
sufficiently small, the root of~\eqref{21011304} is very close to the root of the
Lambert equation $r=xe^x$, studied in
great detail in~\cite{Corless}. Hence, for $x=\phi(r)$ satisfying~\eqref{21011304},
it follows that 
\begin{equation}\label{22040901}
\log r-\log\log r< \phi(r)< \log r-\log\log r+\log\log\log r.
\end{equation}
Remark that if $x=0$, then $\theta_1(0)=\frac{\pi}{3}, \theta_2(0)=\frac{2\pi}{3}$
and thus $\bigtriangleup(z=e^{\frac{2\pi}{3}i})$ is the unit equilateral triangle. As
$x$ increases, both $\theta_1(x)$ and $\theta_2(x)$ decrease, and, finally when
$x=\phi(r)$, $\theta_1(\phi(r))=\theta_2(\phi(r))=0$ and
$\bigtriangleup\Big(z=\frac{1}{\big(1+\frac{\phi(r)}{r}\big)^{r}}\Big)$ becomes
a flat triangle. 
\begin{enumerate}
\item By the assumption,  
\begin{equation*}
|z|=|ae^{i\theta_1(x)}-1|<\epsilon
\end{equation*}
and, as
\begin{equation*}
  |ae^{i\theta_1(x)}-1|=\Big|\Big(1+\frac{x}{r}\Big)e^{i\theta_1(x)}-1\Big|
  =\Big(1+\frac{x}{r}\Big)^2(2-2\cos \theta_1(x))
\end{equation*}
(with $\cos \theta_1(x)$ in~\eqref{22040801}) is a decreasing function of $x$,
there exists $\gamma=\gamma(\epsilon)$ depending on $\epsilon$ such that 
\begin{equation*}
  \Big|\Big(1+\frac{x}{r}\Big)e^{i\theta_1(x)}-1\Big|<\epsilon
  \Longleftrightarrow \gamma<x\leq \phi(r).
\end{equation*}
In conclusion, $(\gamma, \phi(r)]$ is a desired domain for $\theta_1(x)$ (and $\theta_2(x)$)
with the required property.
\item Now we list the solutions of $z^q(1+z)^p=1$ over the above domain. First the second equation
  in~\eqref{21012803} is reduced to 
\begin{equation}\label{21011501}
r\theta_1(x)+\theta_2(x)-\frac{2\pi k}{q}\in 2\pi\mathbb{Z}
\end{equation}  
and  
\begin{equation*}
  -\frac{2\pi k}{q} \leq r\theta_1(x)+\theta_2(x)
  -\frac{2\pi k}{q}\leq  r\theta_1(\gamma) +\theta_2(\gamma)-\frac{2\pi k}{q}
\end{equation*}
for $\gamma\leq  x\leq \phi(r)$. Since $r\theta_1(x)+\theta_2(x)$ is a decreasing
function of $x$ and $\theta_2(\gamma)\leq \frac{2\pi}{3}$, for each $k$ ($0\leq k\leq q-1$), the number
of $x$ satisfying~\eqref{21011501} is at most
\begin{equation*}
  \floor[\Big]{\frac{r \theta_1(\gamma)+\theta_2(\gamma)}{2\pi}-\frac{2\pi k}{q}}+1
  =\ceil[\Big]{\frac{r \theta_1(\gamma)}{2\pi} }+1.
\end{equation*}
Let $x^k_l\in (\gamma, \phi(r)]$ be a number satisfying 
\begin{equation*}
  \frac{r\theta_1(x^k_l)+\theta_2(x^k_l)-\frac{2\pi k}{q}}{2\pi}
  =l
\end{equation*}
where $0\leq l\leq \ceil[\Big]{\frac{r \theta_1(\gamma)}{2\pi} }$ and $l \in \mathbb{Z}$.
Then 
\begin{equation*}
  \theta_1(x^k_l)= \frac{2\pi l-\theta_2(x^k_l)+\frac{2\pi k}{q}}{r}
  \geq \frac{2\pi( l-\frac{1}{3})}{r} \geq \frac{l+1}{r}
\end{equation*}
for every $k$ ($0\leq k\leq q-1$) and $l\geq 1$. Using the fact that $\theta_1(x)$ is
a decreasing function, one further gets 
\begin{equation*}
x^k_l\leq \log \frac{r}{l+1}\quad \text{and so}\quad 1+\frac{x^k_l}{r}\leq 1+\frac{\log \frac{r}{l+1}}{r}\quad (l\geq 1).    
\end{equation*}
Clearly $x^k_0\leq \phi(r)\leq \log r$ (by~\eqref{22040901}) for any $0\leq k\leq q-1$. Consequently, for each $h$ ($1\leq h\leq
\ceil[\Big]{\frac{r \theta_1(\gamma)}{2\pi}}+1$),
the product of the moduli of the first $hq$ largest $w$ satisfying
$\theta_1(x), \theta_2(x)\geq 0$ and~\eqref{21011501} is bounded above
by\footnote{Recall from~\eqref{22040903} and~\eqref{22040905} that
  $|w|=|1+z|=|a|=1+\frac{x}{r}$.}
\begin{equation*}
  \prod\limits_{k=0}^{q-1}\prod\limits_{l=0}^{h-1}
  \bigg(1+\frac{x^k_l}{r}\bigg)\leq \prod\limits_{l=0}^{h-1}
  \bigg(1+\frac{\log\frac{p/q}{l+1}}{p/q}\bigg)^q=\prod\limits_{l=1}^{h}
  \bigg(1+\frac{\log\frac{p/q}{l}}{p/q}\bigg)^q.
\end{equation*}
Similarly, counting $x$ with $\theta_1(x), \theta_2(x)\leq 0$, one finally attains
the desired result. This concludes the
proof of the lemma for the special case of $w=1+z$. 
\end{enumerate}
\item To prove the lemma for the general case, suppose $w=\phi(z)$ is given as
  $\phi(z)=1+\sum\limits^{\infty}_{i=1} c_iz^i$ and let 
\begin{equation*}
  ae^{i\theta_1}:=1+\sum\limits^{\infty}_{i=1} c_iz^i,\quad
  be^{i\theta_2}:=\sum\limits^{\infty}_{i=1} c_iz^i, \quad ce^{i\theta_3}:=z.
\end{equation*}
We consider $b$ (resp. $\theta_2$) as an analytic function of $c$ (resp.
$\theta_3$). Thus, for $z$ sufficiently small, $b$ (resp. $\theta_1$) is
approximately very close to $|c_m|c^m$ (resp. $\arg c_m+m\theta_3$) where
$c_m$ is the coefficient of the first non-zero (and non-constant) term
of $\phi$.  Now~\eqref{21012209} is 
\begin{equation*}
(1+\phi(z))^{p/q}=\frac{e^{2\pi ik/q}}{z} \quad (0\leq k\leq q-1), 
\end{equation*}
which is equivalent to 
\begin{equation*}
c=a^{-r}, \quad r\theta_1+\theta_3\equiv\frac{2\pi ik}{q}\quad(\noindent \mod 2\pi i)
\end{equation*}
where $r=p/q$. Similar to the previous case, provided $x:=r(a-1)$,
we get $\frac{1}{c}=a^r=\big(1+\frac{x}{r}\big)^r$ and
\begin{equation*}
\begin{gathered}
  \sin \theta_1=\pm \sqrt{-\bigg(\frac{x}{r}
    -\frac{\frac{x}{r}}{1+\frac{x}{r}}-\frac{b^2}{a}\bigg)
    -\frac{1}{4}\bigg(\frac{x}{r}-\frac{\frac{x}{r}}{1+\frac{x}{r}}
    -\frac{b^{2}}{a}\bigg)^2}\\
  =\pm \sqrt{\bigg(\frac{b^{2}}{a}-\frac{x^2}{r(x+r)}\bigg)
    -\frac{1}{4}\bigg(\frac{b^{2}}{a}-\frac{x^2}{r(x+r)}\bigg)^2}.
\end{gathered}
\end{equation*}
Since $c=a^{-r}$ and $b\approx|c_m|c^m$ for $z$ small, $b \approx |c_m|a^{-rm}$ and so  
\begin{equation}
  \label{21013002}
\begin{gathered}
  \sin \theta_1\approx\pm \sqrt{\bigg(\frac{|c_m|^2}{a^{2rm+1}}
    -\frac{x^2}{r(x+r)}\bigg)-\frac{1}{4}\bigg(\frac{|c_m|^2}{a^{2rm+1}}
    -\frac{x^2}{r(x+r)}\bigg)^2}\\
  =\pm \sqrt{\bigg(\frac{|c_m|^2}{\big(1+\frac{x}{r}\big)^{2rm+1}}
    -\frac{x^2}{r(x+r)}\bigg)-\frac{1}{4}
    \bigg(\frac{|c_m|^2}{\big(1+\frac{x}{r}\big)^{2rm+1}}
    -\frac{x^2}{r(x+r)}\bigg)^2}
\end{gathered}
\end{equation}
for $z$ sufficiently small. Let $\phi(r)$ be the number satisfying
\begin{equation*}
\begin{gathered}
\frac{|c_m|}{\big(1+\frac{x}{r}\big)^{rm}}=\frac{x}{r}. 
\end{gathered}
\end{equation*}
As $\big(1+\frac{x}{r}\big)^{r}\approx e^x$ for $\frac{x}{r}$ small,
$\phi(r)$ is very close to the root of $|c_m|r=xe^{xm}$ and
\begin{equation*}
d_1\log r< \phi(r)< d_2\log r
\end{equation*}
for some $d_1, d_2\in \mathbb{Q}$. Further, one can check~\eqref{21013002}
is a decreasing function of $x$ over $0\leq x\leq \phi(r)$.
Applying similar methods used in the proof of the previous special case, we
obtain the desired result. 
\end{enumerate}
\end{proof}

In the next lemma, consider $M, M_{p/q}$ and the Dehn-filling polynomial
$A_{p,q}(t)$ as usual. As remarked earlier, the motivation for each statement
of Lemma \ref{21011701} was already explained in Section \ref{key} using a toy model.  

\begin{lemma} \label{21011701}
  For every $\epsilon>0$ sufficiently small, there exist $D(\epsilon)$
  and $\gamma(\epsilon)$ such that, for any coprime pair $(p,q)\in \mathbb{N}^2$ with $p/q>\frac{1}{\epsilon}$, the following hold.
\begin{enumerate}
\item
  If $t_0$ is a root of $A_{p, q}(t)$ such that $1<|t_0|^{p}<\frac{1}{\epsilon}$,
  then $|t_0|<1+\frac{D(\epsilon)}{p}$.
\item
  If $t_0$ is a root of $A_{p, q}(t)$ such that $|t_0|>1$ and
  $|t_0^{q}-\zeta|>\epsilon$ for every $\zeta$ satisfying
  $\sum\limits_{i=a_n}^{b_n}c_{i,n}\zeta^{-i}=0$, then
  $|t_0|<1+\frac{D(\epsilon)}{p}$.
\item
  There are at most $2\ceil[\Big]{\gamma(\epsilon)p/q}q$ roots of $A_{p,q}(t)=0$
  whose moduli are bigger than $1+\frac{D(\epsilon)}{p}$. Further, for
  each $h$ ($1\leq h\leq \ceil[\Big]{\frac{\gamma(\epsilon)p}{q}}+1$), the
  product of the moduli of the first $2hq$ largest roots of $A_{p, q}(t)$
  is bounded above by\footnote{Note that the exponent $2$ in~\eqref{21022601}
    is differed from the exponent $2q$ given in~\eqref{rootbound}. } 
  \begin{equation}
    \label{21022601}
\prod\limits_{l=1}^{h} \bigg(1+\frac{d \log\frac{p/q}{l}}{p/q}\bigg)^{2}
\end{equation}
where $d$ is some constant depending only on $M$. 
\item
  $D(\epsilon)\rightarrow \infty$ and
$\gamma(\epsilon)\rightarrow 0$ as $\epsilon \rightarrow 0$. 
\end{enumerate}
\end{lemma}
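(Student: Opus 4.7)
My plan is to prove (1)--(4) in order, with (3) being the main step and reducing, via a local change of variables, to the model equation of Lemma~\ref{21013003}. Parts (1) and (2) are relatively direct. For (1), $|t_0|^p<1/\epsilon$ gives $|t_0|\leq\epsilon^{-1/p}\leq 1+2\log(1/\epsilon)/p$ for $p$ large (which is guaranteed by $p/q>1/\epsilon$), so it suffices to take $C_1(\epsilon)\geq 2\log(1/\epsilon)$. For (2), apply Lemma~\ref{21012208}(a) to conclude $|t_0|^q\leq(1+D/q)^q\leq e^D$, so $m_0:=t_0^{-q}$ lies in the annulus $e^{-D}\leq|m_0|<1$. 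The hypothesis $|t_0^q-\zeta|>\epsilon$ for every $\zeta$ with $\sum c_{i,n}\zeta^{-i}=0$ (that is, $\zeta^{-1}$ ranging over the nonzero roots of the leading polynomial $P_n(x):=\sum_{i=a_n}^{b_n}c_{i,n}x^i$ of $A(m,\ell)$ viewed as a polynomial in $\ell$) then confines $m_0$ to a compact set $K_\epsilon$ on which $|P_n(m_0)|\geq\delta(\epsilon)>0$ (recall that the nonzero roots of $P_n$ are roots of unity by Theorem~\ref{21012801}). A Cauchy-type root bound for $A(m_0,\ell)\in\BC[\ell]$ then gives $|\ell_0|=|t_0|^p\leq C(\epsilon)$, hence $|t_0|<1+C_1(\epsilon)/p$ with $C_1(\epsilon):=2\log C(\epsilon)$; since $K_\epsilon$ shrinks onto the roots of $P_n$ as $\epsilon\to 0$, $C_1(\epsilon)\to\infty$, giving half of (4).

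For (3), parts (1) and (2) imply that any root $t_0$ with $|t_0|>1+C_1(\epsilon)/p$ must satisfy $|t_0^q-\zeta|\leq\epsilon$ for at least one $\zeta$ in the finite set $Z_M$ of reciprocals of nonzero roots of $P_n$. Working $\zeta$ by $\zeta$, I would pass to local coordinates at the corner $(m,\ell)=(\zeta^{-1},\infty)$ of the $A$-polynomial curve. In the generic case where $\zeta^{-1}$ is a simple root of $P_n$, the branch of $A(m,\ell)=0$ going to $\ell=\infty$ takes the form $\ell\cdot u=c_0\,g(u)$, with $u:=m-\zeta^{-1}$, $g$ analytic near $0$, $g(0)=1$, and $c_0=-P_{n-1}(\zeta^{-1})/P_n'(\zeta^{-1})$. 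Substitution into $m^p\ell^q=1$ gives $(1+\zeta u)^p g(u)^q u^{-q}=\zeta^p c_0^{-q}=:C$. After the analytic change of variables
\begin{equation*}
z := C^{1/q}\,u/g(u), \qquad w := 1/(1+\zeta u)
\end{equation*}
(with any fixed branch of $C^{1/q}$), this becomes $z^q w^p=1$; moreover, $w$ is an analytic function $\phi(z)$ of $z$ near $0$ with $\phi(0)=1$, precisely the setting of Lemma~\ref{21013003}. A direct computation gives $|w|=1/|m_0|=|t_0|^q$ and $|w-1|=|t_0^q-\zeta|$, so the conditions $|t_0|>1$ and $|t_0^q-\zeta|<\epsilon$ translate \emph{exactly} into the Lemma's $|w|>1$ and $|w-1|<\epsilon$. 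Since $\gcd(p,q)=1$ forces distinct roots $t_0$ to yield distinct pairs $(z,w)$, the Lemma's count becomes the count of $t_0$'s after summing over the finite set $Z_M$ (absorbed into $C_2(\epsilon)$), and the $(2q)$-exponent product bound on $|w|$ converts via $|w|=|t_0|^q$ into the $2$-exponent bound~\eqref{21022601} on $|t_0|$. The decay $C_2(\epsilon)\to 0$ is inherited from Lemma~\ref{21013003}.

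The main obstacle is the change of variables in (3) when $\zeta^{-1}$ is a multiple root of $P_n$: the function $g$ is then only a Puiseux series in $u$, so one must work on the appropriate finite branched covering of a punctured disk, and the branches of $C^{1/q}$ must be matched with the sheets carefully so that no $t_0$ is double-counted or missed. Once this bookkeeping is settled, the rest of (3) is an immediate consequence of Lemma~\ref{21013003}.
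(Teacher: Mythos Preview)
Your overall strategy matches the paper's: parts (1), (2), (4) are direct, and (3) is the main step, reducing to Lemma~\ref{21013003} via a local change of variables at each point $(\zeta^{-1},\infty)$ on the $A$-curve, with the key identification $|w|=|t_0|^{q}$ so that the product bound~\eqref{rootbound} on $|w|$ becomes~\eqref{21022601} on $|t_0|$ after taking $q$-th roots. The paper implements the coordinate change differently and more cleanly: after normalizing to $\zeta=1$, it passes to the projective closure of $A(1/m',\ell)=0$ and works in the affine chart $(x,z)=(m'/\ell,\,1/\ell)=(t_0^{\,q-p},t_0^{-p})$; the relevant branch then reads $x=z\phi(z)$ with $\phi(0)=1$, and the Dehn-filling equation becomes $z^{q}\phi(z)^{p}=1$ with $w=\phi(z)=t_0^{q}$. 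The advantage is that this $\phi$ depends only on $M$, not on $(p,q)$. In your substitution $z=C^{1/q}\,u/g(u)$, by contrast, the function $\phi$ expressing $w$ in terms of $z$ depends on the phase of $C^{1/q}=(\zeta^{p}c_0^{-q})^{1/q}$ and hence on $(p,q)$; since Lemma~\ref{21013003} asserts that $d$ depends on $\phi$, you owe one extra line noting that its proof uses only the moduli $|c_m|$ of the Taylor coefficients, which are unchanged under the rotation $z\mapsto e^{i\theta}z$. (Equivalently, take $z:=1/\ell=u/(c_0 g(u))$ instead; then $\phi$ is fixed and you recover the paper's chart.) With that caveat your argument is correct. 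Your flag on the multiple-root/Puiseux case is well placed; the paper simply asserts that ``$x$ is represented as an analytic function $\varphi(z)$'' and does not address singular branches.
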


\begin{proof}
  For (1), if $|t_0|^{p}<\frac{1}{\epsilon}$ with $p$ sufficiently large, taking
  logarithms, there exists $D(\epsilon)$ such that $|t_0|<1+\frac{D(\epsilon)}{p}$. 

For (2), if $|t_0^{q}-\zeta|>\epsilon$ for every $\zeta$ satisfying
  $\sum\limits_{i=a_n}^{b_n}c_{i,n}\zeta^{-i}=0$, then there exists
  $\delta(\epsilon)>0$ such that
  $\Big|\sum\limits_{i=a_n}^{b_n}c_{i,n}t_0^{-iq}\Big|>\delta(\epsilon)$ and so
  \begin{equation}
    \label{21011603}
    \delta(\epsilon)|t_0^{np}|<\Big|\sum\limits_{i=a_n}^{b_n}c_{i,n}t_0^{-iq}
    \Big||t_0^{np}|<nL|t_0^{(n-1)p}|\Longrightarrow |t_0^{p}|<
    \frac{nL}{\delta(\epsilon)}
\end{equation}
where $L$ is some number bigger than
$\max\limits_{\substack{0\leq j\leq n-1}}
\Big|\sum\limits_{i=a_j}^{b_j}c_{i,j}t_0^{-iq}\Big|$. By~\eqref{21011603},
there exists some constant $D(\epsilon)$ depending on $\epsilon$ such that 
$|t_0|<1+\frac{D(\epsilon)}{p}$. 

For (3), suppose $t_0$ is a root of $A_{p,q}(t)=0$ such that 
  $|t_{0}^{q}-\zeta|<\epsilon$ where
  $\sum\limits_{i=a_n}^{b_n}c_{i,n}\zeta^{-i}=0$ and
  $|t_0|^{p}>\frac{1}{\epsilon}$ for some sufficiently small $\epsilon$. Note that
  $\zeta$ is a root of unity by Theorem~\ref{21012801} and, without loss of
  generality, we
  further assume $\zeta=1$. If $A(m,\ell)=0$ is the $A$-polynomial of
  $M$ as given in~\eqref{21020202}, one can view
  $(m, \ell)=(t_0^{-q}, t_0^{p})$ as an intersection point between
  $A(m,\ell)=0$ and $m^{p}\ell^{q}=1$. If we let $m':=\frac{1}{m}$
  and $f(m', \ell):=A(\frac{1}{m'}, \ell)$, then $(m',\ell)=(t_0^{q}, t_0^{p})$ is
  a point lying over $f(m', \ell)=0$ and $\ell^{q}=(m')^{p}$. For the sake
  of simplicity, we consider the projective closure of $f(m',\ell)=0$ and
  work with a different affine chart of it. More precisely, let $h(x',y',z')=0$ be the
  homogeneous polynomial representing the projective closure of
  $f(m', \ell)=0$ with $m'=\frac{x'}{z'}, \ell=\frac{y'}{z'}$. That is,
  $h(x',y',z')=0$ is obtained from
    $f\Big(\frac{x'}{z'}, \frac{y'}{z'}\Big)=0$ by multiplying by a power
    of $z'$ if necessary. Further, if $x:=\frac{x'}{y'},
  z:=\frac{z'}{y'}$ and $k(x,z):=h(x,1,z)$, since $x=\frac{m'}{\ell},
  z=\frac{1}{\ell}$ and
  \begin{equation*}
      \ell^{q}=(m')^{p}\Rightarrow \Big(\frac{y'}{z'}\Big)^{q}
    =\Big(\frac{x'}{z'}\Big)^{p}\Rightarrow z^{p-q}=x^{p},
\end{equation*}
$(x,z)=(t_0^{-p+q}, t_0^{-p})$ is an intersection point between $k(x,z)$ and
$z^{p-q}=x^{p}$. As $(t_0^{-p+q}, t_0^{-p})$ is sufficiently close to $(0,0)$, it
follows that $k(0,0)=0$ and thus $x$ is represented as an analytic
function $\varphi(z)$ of $z$ near $(0,0)$ (i.e. $k(\varphi(z), z)=0$)
with $(t_0^{-p+q}, t_0^{-p})\in (\varphi(z), z)$. Moreover, since
$\frac{t_0^{-p+q}}{t_0^{-p}}=t_0^{q}$ is close enough to $1$,
$\varphi(z)$ is of the following form:
\begin{equation*}
x=\varphi(z)=z\Big(1+\sum^{\infty}_{i=1}c_iz^i\Big).
\end{equation*}
Let $\phi(z):=1+\sum^{\infty}_{i=1}c_iz^i$. Then 
\begin{equation}
  \label{210114001}
\begin{gathered}
z^{p-q}=x^{p}
\Rightarrow z^{p-q}=z^{p}\phi(z)^{p} 
\Rightarrow z^{q}\phi(z)^{p}=1.
\end{gathered}
\end{equation}
In conclusion, $(t_0^{q}, t_0^{-p})$ is a point on
$(\frac{x}{z}, z)=(\phi(z), z)$ satisfying~\eqref{210114001}. Let $w:=\phi(z)$.
As $|t_0|>1$ and $|t_0^q-1|<\epsilon$ by the assumption, we get the desired
result by Lemma~\ref{21013003}. 

Finally, (4) is clear by the construction of $D(\epsilon)$ and
Lemma~\ref{21013003}. 
\end{proof}

\begin{remark}
  \label{21032001}
  Switching $p$ and $q$, one obtains an analogous result for a
  coprime pair $(p,q)$ with $q/p$ is sufficiently large. 
\end{remark}

\begin{remark}
  \label{NZ}
  Note that if $t_0$ (where $|t_0|>1$) is a root of $A_{p,q}(t)$ giving rise to
  the discrete faithful representation of $M_{p/q}$, for $|p|+|q|$ sufficiently
  large, $|t_0|$ is asymptotic to $1+\frac{2\pi \Im \;\tau}{|p+\tau q|^2}$ where
  $\tau$ is a complex number depending only on $M$ satisfying
  $\Im\;\tau\neq 0$ ($\tau$ is called the cusp shape of $M$)~\cite{NZ}.
  This, combining with the above remark, implies that $t_0$ is not the root of
  its minimal polynomial that appears in Theorem \ref{dim} for $|p|+|q|$
  sufficiently large.
\end{remark}


\section{Proof of the main theorem}
Now we prove the main theorem of the paper. Recall
that Theorem~\ref{main} follows from Theorem~\ref{21011903}. We first prove
a special case of Theorem~\ref{21011903} over a restricted domain of $p$ and
$q$. In particular, on this restricted domain, the statement of
Theorem \ref{21011903} holds unconditionally, without relying on Lehmer's
conjecture, thanks to Theorem \ref{dim}. We then expand the domain further and
prove Theorem \ref{21011903} over the extended range. We shall use
Lemma \ref{21011701} and assume Lehmer's conjecture from Theorem \ref{21012507}.
Once Theorem \ref{21012507} is proven, the rest of the proof of
Theorem \ref{21011903} will follow by symmetric properties of the $A$-polynomial
as well as a monomial change of variables on it. 

Finally let us remark again that, as the upper bound in Theorem \ref{21011903}
follows as a corollary of Bez\'{o}ut's theorem, only the lower bound
in~\eqref{gbounds} is proved. 

\begin{theorem}(without Lehmer)\label{22010601}
Let $M, M_{p/q}$ be as usual and $A_{p,q}(t),S_1, S_2$ be the
same as in Lemma~\ref{21012208}. Then there exists $C$ depending
on $S_1$ and $S_2$ such that, for any coprime $(p,q)\in \mathbb{N}^2$ satisfying
$S_1<\frac{p}{q}<S_2$ and any non-cyclotomic irreducible integer
factor $g(t)$ of $A_{p,q}(t)$, 
\begin{equation*}
C\max\{p,q\}\leq \deg g(t).
\end{equation*}
\end{theorem}
\begin{proof}
  By Lemma~\ref{21012208} (b), there exists $D$ depending on $S_1$ and $S_2$
  such that $|t_0|<1+\frac{D}{p}$ for any root $t_0$ of $A_{p,q}(t)=0$. By
  Theorem \ref{dim}, the result follows.  
\end{proof}

\begin{theorem}\label{21012507}(with Lehmer)
Let $M, M_{p/q}$ be as usual and $A_{p,q}(t),S_1$ be the
same as in Lemma~\ref{21012208}. Assuming Lehmer's conjecture, there exists $C$
depending on $S_1$ such that, for any coprime $(p,q)\in \mathbb{N}^2$ satisfying
$\frac{p}{q}>S_1$ and any non-cyclotomic irreducible integer
factor $g(t)$ of $A_{p,q}(t)$, 
\begin{equation}\label{22010603}
C\max\{p,q\}\leq \deg g(t).
\end{equation}
\end{theorem}
\begin{figure}
\centering
\includegraphics[width=0.4\textwidth]{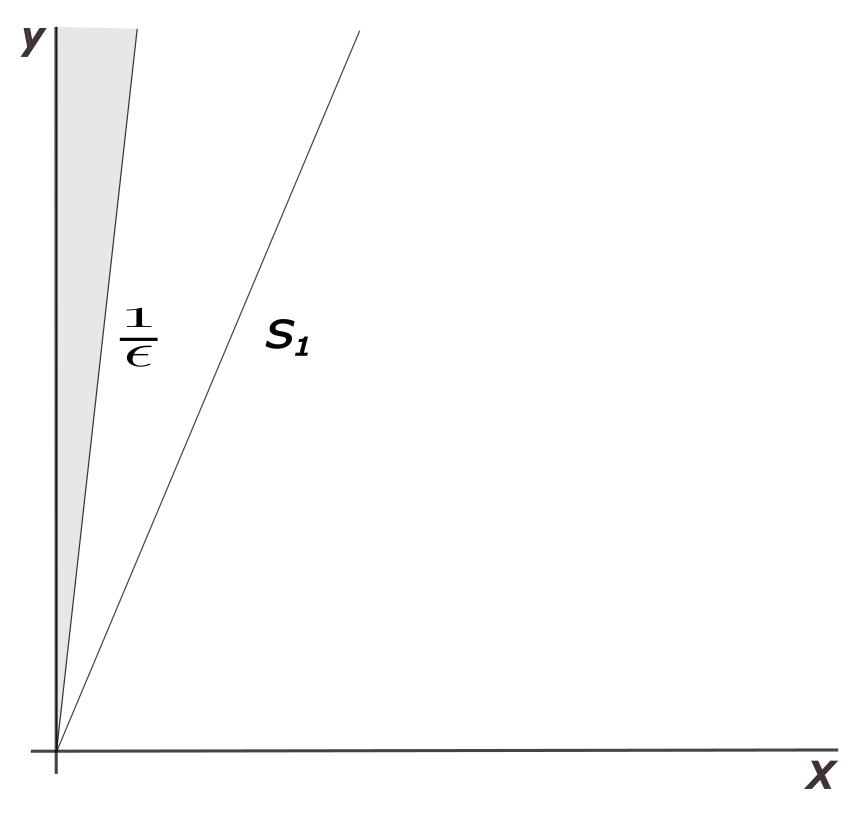}
\caption{In the proof of Theorem \ref{21012507}, to verify the claim for $(p,q)$
  contained in the shaded region above, we need the estimations carried out in
  Lemmas \ref{21013003}-\ref{21011701}.}
\end{figure}
\begin{proof}
  Let $\epsilon$ be some sufficiently small number such that $S_1<\frac{1}{\epsilon}$.
  By Theorem \ref{22010601}, we find $C'$ depending on $S_1$ and $\epsilon$
  satisfying 
\begin{equation*}
C'\max\{p,q\}\leq \deg g(t)
\end{equation*}
for any coprime $(p,q)\in \mathbb{N}^2$ with $S_1<\frac{p}{q}<\frac{1}{\epsilon}$
and any non-cyclotomic irreducible integer factor $g(t)$ of $A_{p,q}(t)$. 

Now suppose there are sequences of positive real numbers
  $\{c_i\}_{i\in \mathbb{N}}$ and coprime pairs $\{(p_i,q_i)\}_{i\in \mathbb{N}}$
  satisfying $\lim\limits_{i\rightarrow \infty}c_i=0$ and
  $\frac{p_i}{q_i}>\frac{1}{\epsilon}$.
  We further assume, for each $i$, there exists a non-cyclotomic irreducible
  integer factor $g_i(t)$ of $A_{p_i,q_i}(t)$ such that  
  \begin{equation}
    \label{21013110}
\deg g_i(t)\leq c_i\max\{p_i, q_i\}.
\end{equation}
We prove 
\begin{claim}\label{23083001}
\begin{equation}
  \label{21012501}
  \varlimsup_{i\rightarrow \infty}\mathscr{M}(g_i(t)) < 1.176280818\dots.
\end{equation}
\end{claim}
Once the above claim is established, the conclusion of Theorem~\ref{21012507}
can be readily deduced as follows. Since the Mahler measure of any non-cyclotomic
polynomial is strictly bigger than $1.176280818\dots$, by Lehmer's conjecture, this is a contradiction and thus there exists a constant
$C''$ depending on $\epsilon$ such that 
\begin{equation*}
C''\max\{p,q\}\leq \deg g(t) 
\end{equation*}
for any coprime $(p,q)\in \mathbb{N}^2$ satisfying $\frac{p}{q}>\frac{1}{\epsilon}$
and any non-cyclotomic irreducible integer factor $g(t)$ of $A_{p,q}(t)$. As
$\epsilon$ is arbitrarily, we may choose $C$ satisfying the statement of
Theorem~\ref{21012507}. 

\begin{proof}[Proof of Claim \ref{23083001}]
We now give the proof of~\eqref{21012501}. 
By Theorem ~\ref{21012801}, Lemma~\ref{21011905} and multiplying by a power of
$t$ if necessary, we assume both $A_{p_i,q_i}(t)$ and its irreducible factor
$g_i(t)$ are monic
  integer polynomials.
  
  As $c_i\rightarrow 0$, for $i$ sufficiently large, the product of the moduli of
  the first $\lfloor{c_ip_i\rfloor}$ largest roots of $A(t^{-q_i}, t^{p_i})=0$
  (and so $\mathscr{M}(g_i(t))$ as well) is bounded above by
  \begin{equation}
    \label{21011503}
  \prod_{l=1}^{\lceil{\frac{c_ip_i}{q_i}\rceil}}
  \Bigg(1+\frac{d\log \frac{p_i/q_i}{l}}{p_i/q_i}\Bigg)^2
\end{equation}
where $d$ is some constant depending only on $M$ by Lemma~\ref{21011701}. We
show~\eqref{21011503} is strictly less than $1.176280818\dots$ as $i\rightarrow \infty$. To
simplify notation, let $r_i:=p_i/q_i$.
Taking logarithms to $\mathscr{M}(g_i(t))$ and~\eqref{21011503},  
\begin{equation*}
  \log \mathscr{M}(g_i(t))\leq 2\sum_{l=1}^{\lceil{c_ir_i\rceil}}
  \log\Big(1+\frac{d\log \frac{r_i}{l}}{r_i}\Big).
\end{equation*}
Since
\begin{equation*}
\begin{gathered}
  2\sum_{l=1}^{\lceil{c_ir_i\rceil}}\log\Big(1
  +\frac{d\log \frac{r_i}{l}}{r_i}\Big)\leq 2\sum_{l=1}^{\lceil{c_ir_i\rceil}}
  \frac{d\log \frac{r_i}{l}}{r_i}
\end{gathered}
\end{equation*}
and
\begin{equation*}
\begin{gathered}
  \lceil{c_ir_i\rceil}\log \lceil{c_ir_i\rceil}-\lceil{c_ir_i\rceil}<\log
  \lceil{c_ir_i\rceil}!, 
\end{gathered}
\end{equation*}
it follows that 
\begin{gather}
  \log \mathscr{M}(g_i(t))\leq 2\sum_{l=1}^{\lceil{c_ir_i\rceil}}
  \frac{d\log \frac{r_i}{l}}{r_i}=\frac{2d\log
    \frac{r_i^{\lceil{c_ir_i\rceil}}}{\lceil{c_ir_i\rceil}!}}{r_i}=
  \frac{2d\lceil{c_ir_i\rceil}\log r_i-2d\log \lceil{c_ir_i\rceil}!}{r_i}
  \nonumber\\
  <\frac{2d\lceil{c_ir_i\rceil}\log r_i-2d\big(\lceil{c_ir_i\rceil}
    \log\lceil{c_ir_i\rceil}-\lceil{c_ir_i\rceil}\big)}{r_i}
  <\frac{2d\lceil{c_ir_i\rceil}\log
    \frac{r_i}{\lceil{c_ir_i\rceil}}+2d\lceil{c_ir_i\rceil}}{r_i}.
  \label{21013111}
\end{gather}
We now consider two cases.

\noindent{\bf Case 1.}
If $c_ir_i\geq 1$, then~\eqref{21013111} is bounded above by 
\begin{equation}
  \label{21013112}
\frac{4dc_ir_i\log\frac{1}{c_i}+4dc_ir_i}{r_i}=-4dc_i\log c_i+4dc_i.
\end{equation}
Since $\lim\limits_{i\rightarrow \infty}c_i=0$,~\eqref{21013112}
\big(resp.~\eqref{21011503}\big) converges to $0$ (resp. $1$) as
$i\rightarrow \infty$.  

\noindent{\bf Case 2.}
If $c_ir_i<1$, then~\eqref{21013111} is bounded above by
  \begin{equation}
    \label{21020101}
\frac{4d\log r_i+4d}{r_i}.
\end{equation}
As $r_i(=p_i/q_i)>\frac{1}{\epsilon}$ and $d$ depends only on $M$,~\eqref{21020101}
is strictly less than $\log 1.176280818\dots$ provided $\epsilon$ is sufficiently small. This completes the proof of~\eqref{21012501} as well as the proof of Theorem~\ref{21012507}.   
\end{proof}
\end{proof}

\begin{remark}
  \label{21020201}
For $p>0$ and $q<0$, one analogously gets the following result. 
  Suppose $A_{p,q}(t)$ is given as in~\eqref{21013103}. Let $S_1$ be some positive
  constant such that $S_1>\frac{b_j-b_n}{n-j}$ for every $j$ ($0\leq j\leq n-1$).
  Assuming Lehmer's conjecture, there exists $C$ depending on $S_1$ such that,
  for any coprime
  pair $(p,q)$ with $p>0, q<0, \frac{p}{|q|}>S_1$ and any non-cyclic irreducible
  factor $g(t)$ of $A_{p,q}(t)$, 
\begin{equation*}
C\max\{p,|q|\}\leq \deg g(t).
\end{equation*}
\end{remark}

We now use the $\SL_2(\BZ)$ action on the lattice $\BZ^2$ which amounts to
a monomial change of variables on $A(m,\ell)$ and hence on $(p,q)$ and on
$A_{p,q}(t)$. Combining this action with Theorem~\ref{21012507} and Remark~\ref{21020201}, we obtain the following.

\begin{theorem}(with Lehmer)
  \label{2101280}
  Let $M, M_{p/q}$ and $A_{p,q}(t)$ be as in Theorem~\ref{21012507}. Assuming Lehmer's
  conjecture, the following statements hold. 
\begin{enumerate}
\item
  If $\frac{a_n-a_j}{n-j}<0$ for all $0\leq j\leq n-1$, then there exists
  $C$ depending on $M$ such that, for any coprime pair
  $(p,q)\in \mathbb{N}^2$ and any non-cyclic irreducible factor $g(t)$ of $A_{p,q}(t)$, 
\begin{equation*}
C\max\{p,q\}\leq \deg g(t).
\end{equation*}
\item
  If $\frac{a_n-a_j}{n-j}>0$ for some $0\leq j\leq n-1$, then there exists $C$
  depending on $M$ such that, for any coprime pair
  $(p,q)\in \mathbb{N}^2$ with $\frac{p}{q}>S_A$ where $S_A$ is the one given
  in~\eqref{21020203} and any non-cyclic irreducible factor $g(t)$ of $A_{p,q}(t)$, 
\begin{equation*}
C\max\{p,q\}\leq \deg g(t).
\end{equation*}
\end{enumerate}
\end{theorem}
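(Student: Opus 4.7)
The strategy is to reduce to Theorem~\ref{21012507} and Remark~\ref{21020201} by exploiting the natural $\SL_2(\BZ)$-action on the meridian-longitude basis of the cusp torus. A matrix $\gamma=\begin{pmatrix}a&b\\c&d\end{pmatrix}\in\SL_2(\BZ)$ determines a new basis $(\mu',\lambda')=(\mu^a\lambda^b,\mu^c\lambda^d)$; correspondingly the $A$-polynomial transforms by the monomial substitution $(m,\ell)\mapsto((m')^d(\ell')^{-b},(m')^{-c}(\ell')^a)$, its Newton polygon transforms by $(\gamma^T)^{-1}$, and the Dehn-filling slope $(p,q)$ transforms to $(p',q')=(dp-cq,-bp+aq)$. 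A direct calculation shows that $-qi+pj=-q'(di-cj)+p'(-bi+aj)$, whence the Dehn-filling polynomial is invariant: $A'_{p',q'}(t)=A_{p,q}(t)$. Moreover $\max\{|p|,|q|\}\asymp\max\{|p'|,|q'|\}$ with constants depending only on $\gamma$, so any bound of the form~\eqref{gbounds} in one basis transfers to any other with only an adjustment of constants. Since only finitely many $\gamma$ are used below, the final constants $C_2,C_3$ remain uniform.

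For part (2), with $s_A>0$ and $p/q>s_A$, the region $p/q>C_1$ (for any fixed $C_1>s_A$) is handled directly by Theorem~\ref{21012507}. For the remaining strip $s_A<p/q\leq C_1$, we cover it by finitely many closed subintervals whose endpoints are the distinct slopes of the top-left edges of $\NT(A)$. For each such subinterval, we choose $\gamma\in\SL_2(\BZ)$ whose action $(\gamma^T)^{-1}$ sends the primitive direction vector of the relevant edge to a horizontal vector (possible because $\SL_2(\BZ)$ acts transitively on primitive lattice vectors); in the new basis that edge becomes the top edge of $\NT(A')$, the new slope constant $s_{A'}$ drops strictly below $p'/q'$ for $(p,q)$ in the subinterval, and Theorem~\ref{21012507} applies.

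For part (1), with $s_A<0$, the vertex $(a_n,n)$ maximises $-qi+pj$ over $\NT(A)$ throughout the open first quadrant. For $p/q>C_1$ (fixed $C_1>0$), Theorem~\ref{21012507} applies directly. For $0<p/q\leq C_1$ we apply $\gamma=\begin{pmatrix}0&1\\-1&0\end{pmatrix}$, rotating $\NT(A)$ by $-90^\circ$ and sending $(p,q)\mapsto(q,-p)$, so that $p'=q>0$, $q'=-p<0$, and $p'/|q'|=q/p\geq 1/C_1$. Combining the central symmetry of $\NT(A)$ (Theorem~\ref{21011606}(1)) with the case (1) hypothesis yields the slope condition on $\NT(A')$ required by Remark~\ref{21020201}, which then gives the bound on $A'_{p',q'}(t)=A_{p,q}(t)$.

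The main obstacle, beyond quoting Theorem~\ref{21012507} and Remark~\ref{21020201}, is the combinatorial verification that for each $\gamma$ used the transformed Newton polygon and transformed parameters $(p',q')$ jointly satisfy the hypotheses of one of the two results. This reduces to a finite case analysis indexed by the edges of $\NT(A)$; finiteness ensures that the constants $C_2,C_3$ are uniform, and no new analytic input is required beyond what was used to prove Theorem~\ref{21012507}.
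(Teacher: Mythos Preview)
Your approach is the same as the paper's: use the $\SL_2(\BZ)$-action on the cusp basis to move the slope $p/q$ into a range where Theorem~\ref{21012507} or Remark~\ref{21020201} applies, and note that only finitely many matrices are needed so the constants stay uniform. Two points of friction with the paper's execution are worth flagging.

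For part~(2), the strip $s_A<p/q\leq C_1$ contains no edge-slope of $\NT(A)$ in its interior ($s_A$ is by definition the largest such slope), so only a \emph{single} basis change $\gamma$ aligned to the edge of slope $s_A$ is required; your ``finitely many closed subintervals whose endpoints are the distinct slopes'' is the inductive structure of the proof of Theorem~\ref{21011903}, not of this theorem. More substantively, with the natural choice of $\gamma$ (making that edge horizontal) one computes $q'=-bp+aq<0$ whenever $p/q>s_A=a/b$, so $(p',q')\notin\BN^2$ and Theorem~\ref{21012507} does not apply as stated. The paper handles exactly this by invoking Remark~\ref{21020201} (the $p'>0,\,q'<0$ variant) after showing $|p'/q'|$ is large. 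Your argument is easily repaired the same way (indeed you already use Remark~\ref{21020201} in part~(1)), but as written the sentence ``Theorem~\ref{21012507} applies'' skips a necessary sign check.

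For part~(1), your rotation by $\begin{pmatrix}0&1\\-1&0\end{pmatrix}$ followed by Remark~\ref{21020201} is equivalent to the paper's ``switch $p$ and $q$ and use the analogue of Lemma~\ref{21011701}''. Note that for Remark~\ref{21020201} to fire on $A'$ you need $p'/|q'|=q/p\geq 1/C_1$ to exceed the slope threshold for $A'$, so $C_1$ must be chosen small enough at the outset; the paper makes this explicit by taking $C_1=\epsilon$.
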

\begin{figure}

\centering
\includegraphics[width=0.4\textwidth]{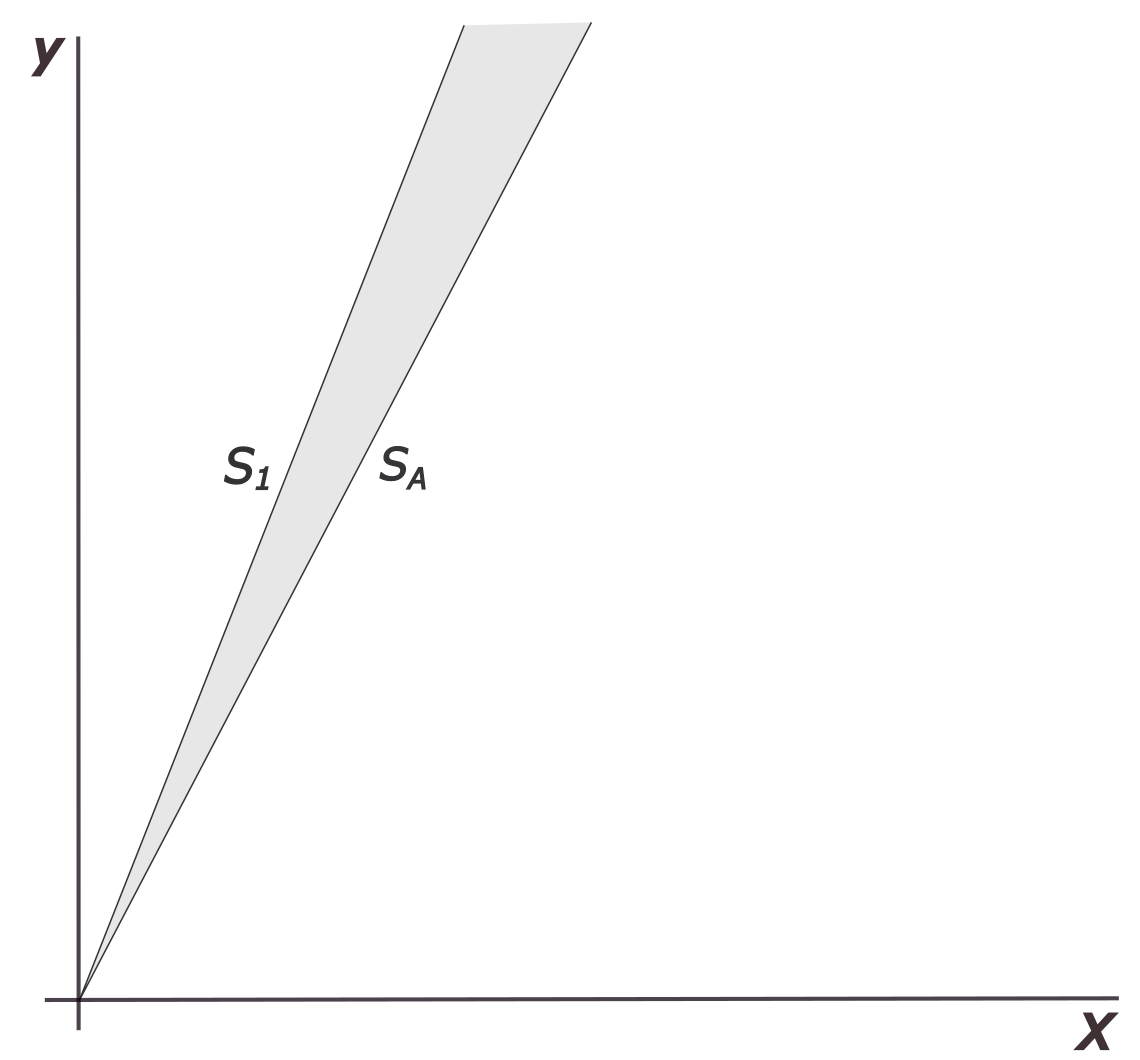}
\caption{We have established the claim for $(p, q)$ in the region bounded by the
  $y$-axis and the line with slope $S_1$, as proven in Theorem \ref{21012507}. To
  extend the claim further to the shaded region above, we apply a change of
  variables and reduce it to the previous case. Please see the proof of
  Theorem \ref{2101280}.}\label{23091101}
\end{figure}

\begin{proof}
  For (1), 
  if $\frac{a_n-a_j}{n-j}<0$ for all $0\leq j\leq n-1$, equivalently, it
  means $c_{a_n, n}t^{-a_nq+np}$ is the leading term of $A_{p,q}(t)$ for every
  coprime pair $(p,q)\in \mathbb{N}^2$ with $p+q$ sufficiently large. Provided
  $S_1$ is chosen to be some sufficiently small $\epsilon$, the claim is true
  for any coprime pair $(p,q)\in \mathbb{N}^2$ with $\frac{p}{q}>\epsilon$ by
  Theorem~\ref{21012507}. If $\frac{p}{q}<\epsilon $ (or, equivalently,
  $\frac{q}{p}>\frac{1}{\epsilon}$), by switching $p$ and $q$, one gets the
  desired result by an analogue of Lemma~\ref{21011701} (see Remark~\ref{21032001})
  and similar arguments given in the proof of
  Theorem~\ref{21012507}. 

  For (2), 
  suppose $\frac{a_n-a_j}{n-j}>0$ for some $j$ ($0\leq j\leq n-1$) and let $S_A$ be
  as in~\eqref{21020203}. Let $S_1:=S_A+\epsilon$ where $\epsilon$ some
  sufficiently small number. If $(p,q)\in \mathbb{N}^2$ with $\frac{p}{q}>S_1$,
  the result follows by Theorem~\ref{21012507}. For $(p,q)\in \mathbb{N}^2$
  satisfying $S_A<\frac{p}{q}<S_1$ (see Figure \ref{23091101}), first let $(a,b)$
  \big(resp. $(r,s)$\big) be
  a coprime pair such that $\frac{a}{b}=S_A$ (resp. $bs+ar=1$). We further assume
  $a,b>0$. Since $S_A(=\frac{a}{b})<\frac{p}{q}<S_1(=S_A+\epsilon)$,  
\begin{equation*}
  \Big|\frac{p}{q}-\frac{a}{b}\Big|<\epsilon \Longrightarrow
  \Big|\frac{bq}{aq-bp}\Big|>\frac{1}{\epsilon} 
\end{equation*}
and so 
\begin{equation}
  \label{21012601}
  \frac{rp+sq}{aq-bp}=\frac{-\frac{r}{b}(aq-bp)+\frac{bs+ar}{b}q}{aq-bp}=
  -\frac{r}{b}+\frac{1}{b}\frac{q}{aq-bp}, 
\end{equation}
implying 
\begin{equation}
  \label{21012704}
\Big|\frac{rp+sq}{aq-bp}\Big|>\frac{1}{b^2\epsilon}-\frac{r}{b}.
\end{equation}
As given in Section~\ref{A-poly}, let $\mu,\lambda$ be a chosen meridian-longitude
pair of $T$, a torus cross section of the cusp of $M$. By setting
$\mu':=\mu^{a}\lambda^{b}, \lambda':=\mu^{-s}\lambda^{r}$, we change basis of
$T$ from $\mu,\lambda$ to $\mu',\lambda'$.
Also let $A'(m', \ell')=0$ be the $A$-polynomial of $M$ obtained from the
new basis. Since $\mu=(\mu')^{r}(\lambda')^{-b}, \lambda=(\mu')^{s}(\lambda')^{a}$ and
$\mu^p\lambda^q=(\mu')^{rp+sq}(\lambda')^{-bp+aq}$, $\frac{p}{q}$-Dehn filling of $M$
under the original basis corresponds to $\big(\frac{rp+sq}{-bp+aq}\big)$-Dehn
filling of $M$ under the new basis. Let $p':=rp+sq, q':=-bp+aq$. Note that 
\begin{equation}
  \label{21013008}
p'>0,\quad  q'<0, \quad |p'/q'|>\frac{1}{b^2\epsilon}-\frac{r}{b} 
\end{equation}
by~\eqref{21012601}-\eqref{21012704}. Since $\frac{1}{b^2\epsilon}-\frac{r}{b}$
is sufficiently big, by Remark~\ref{21020201}, there is $C'$ such that,
for any $(p',q')$ satisfying~\eqref{21013008} and any non-cyclotomic irreducible
factor of $A'(t^{-q'},t^{p'})=0$, 
\begin{equation*}
C' \, p' \leq \deg g(t).
\end{equation*}
Equivalently, it means the degree of any non-cyclotomic irreducible factor of
$A_{p,q}(t)=0$ is bounded above and below by constant multiples of $rp+sq$. This completes the proof (recall that $r$ and $s$ are independent of $p$ and $q$). 
\end{proof}
 
Now we are ready to conclude the proof of Theorem~\ref{21011903}.

\begin{proof}[Proof of Theorem~\ref{21011903}]
We show the theorem only for $p,q>0$. The rest of the cases can be treated similarly. 
Let $A_{p,q}(t)$ be normalized as~\eqref{21013103}. For $\frac{a_n-a_j}{n-j}<0$
for every $j$ ($0\leq j\leq n-1$), the result follows by Theorem~\ref{2101280} (1)
and so it is assumed $\frac{a_n-a_j}{n-j}>0$ for some $j$ ($0\leq j\leq n-1$). Let
$S_A$ be given as in~\eqref{21020203}.
For a coprime pair $(p,q)$ satisfying $p/q>S_A$,
since the claim was also proved in Theorem~\ref{2101280} (2), we further suppose
$p/q<S_A$ and let 
\begin{equation*}
n_2:=\min\limits_{0\leq j\leq n-1}\Big\{j\;|\;S_A=\frac{a_n-a_j}{n-j}\Big\}.
\end{equation*}
Note that $(a_{n_2}, n_2)$ is a corner of $\NT(A)$ and $S_A$ is the slope
of the edge $E_{S_A}$ connecting two corners $(a_n, n)$ and $(a_{n_2}, n_2)$ of
$\NT(A)$.

\begin{figure}
\centering
\begin{subfigure}{.7\textwidth}
  \centering
  \includegraphics[width=0.7\linewidth]{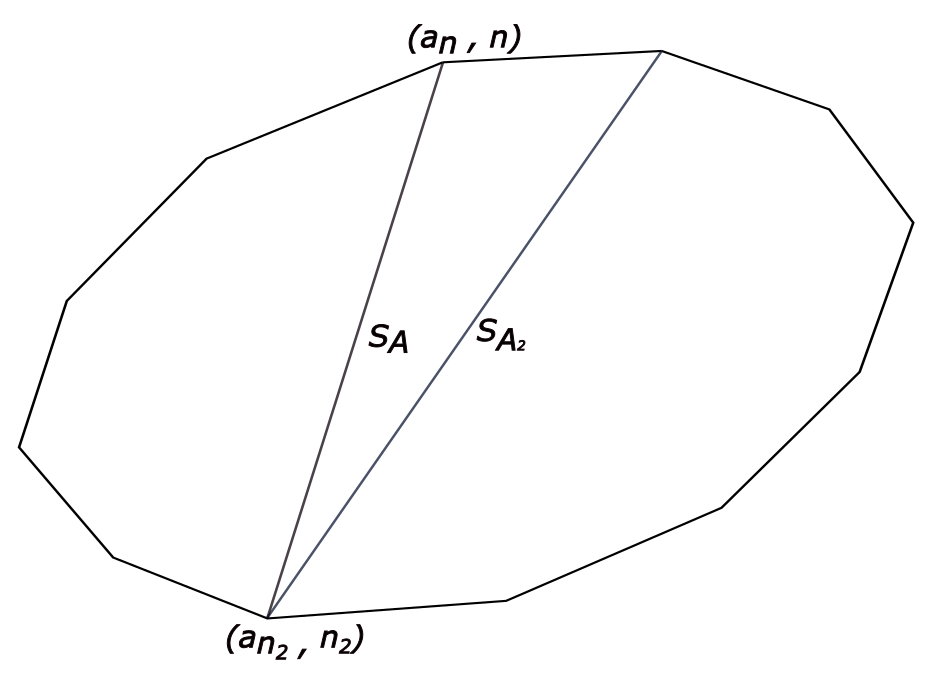}
  \subcaption{If the Newton polygon of the $A$-polynomial $M$ is the same as the one given in Figure \ref{polygon2}, then the corners $(a_n, n),(a_{n_2}, n_2)$ are shown as above in the picture, and $S_A, S_{A_2}$ are the slopes of the depicted edges.}
\end{subfigure}
\begin{subfigure}{.7\textwidth}
  \centering
  \includegraphics[width=0.6\linewidth]{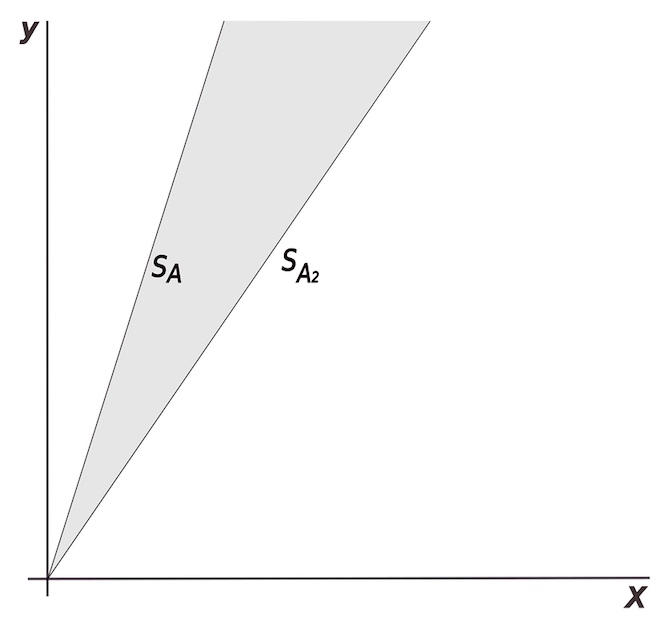}
  \subcaption{Once the claim is proven for $(p,q)$ over the sector between the
    $y$-axis and the edge of slope $S_A$ (as established in Theorem \ref{2101280}),
    the proof of it over the the shaded region is obtained through a change of
    variables.}
  \label{fig:sub2}
\end{subfigure}
\caption{}
\label{polygon}
\end{figure}
We distinguish two cases.

\noindent{\bf Case 1.}
If $n_2=0$ or $\frac{a_{n_2}-a_j}{n_2-j}<0$ for every $0\leq j\leq n_2-1$,
  then $c_{a_{n_2}, n_2}t^{-a_{n_2}q+n_2p}$ is the leading term of $f_{p,q}(t)$ for
  any $(p,q)\in \mathbb{N}^2$ with $\frac{p}{q}<S_A$ and $p+q$ sufficiently large.
  By interchanging $p$ and $q$, one gets the desired result following similar
  steps shown in the proof of Theorem~\ref{2101280} (2).

\noindent{\bf Case 2.}  
  Now suppose $n_2\neq 0$ and $\frac{a_{n_2}-a_j}{n_2-j}>0$ for some
  $0\leq j\leq n_2-1$. Let  
\begin{equation*}
S_{A_2}:=\max\limits_{0\leq j\leq n_2-1}\Big\{\frac{a_{n_2}-a_j}{n_2-j}\Big\}.
\end{equation*} 
Then $S_A>S_{A_2}$ and $S_{A_2}$ is the slope of the edge $E_{S_{A_2}}$ of $\NT(f)$
adjacent to $E_{S_A}$. To show the claim for $(p,q)\in \mathbb{N}^2$ satisfying
$S_{A_2}< \frac{p}{q}< S_A$, let
$T, \mu, \lambda, \mu'(=\mu^{a}\lambda^{b}), \lambda'(=\mu^{-s}\lambda^{r})$ be the
same as in the proof of Theorem~\ref{2101280} (2). Also we denote the
$A$-polynomial of $M$ obtained from $\mu', \lambda'$ by $A'(m', \ell')=0$ and
assume its $p'/q'$-Dehn filling equation $A'_{p',q'}(t):=A'(t^{-q'}, t^{p'})$
is given as 
\begin{equation*}
\begin{gathered}
\sum\limits_{j=0}^{n'} \Big(\sum\limits_{i=a'_j}^{b'_j} c'_{i,j}t^{-q'i}\Big)t^{p'j}
\end{gathered}
\end{equation*}
where $a'_j, b'_j\in \mathbb{Z}$. Under $p'=rp+sq$ and $q'=-bp+aq$, the set
of coprime pairs $(p,q)$ satisfying $S_{A_2}<\frac{p}{q}<S_A$ is transformed to
the set of $(p',q')$ satisfying $\frac{p'}{q'}>S_{A'}$ where 
\begin{equation*}
S_{A'}:=\max\limits_{0\leq j\leq n'-1}\frac{a'_{n'}-a'_j}{n'-j}.
\end{equation*}
Since the conclusion holds for any $A'_{p', q'}(t)$ with
$(p',q')\in \mathbb{N}^2, \frac{p'}{q'}>S_{A'}$ by Theorem~\ref{2101280},
equivalently, it holds for $A_{p, q}(t)$ with
$(p,q)\in \mathbb{N}^2, S_{A_2}<\frac{p}{q}<S_A$ as well. 

Analogously, using the same ideas, one can also prove the statement for
$(p,q)\in \mathbb{N}^2$ satisfying $S_{A_3}<\frac{p}{q}<S_{A_2}$ where $S_{A_3}$ is
the slope of the edge $E_{S_{A_3}}(\neq E_A)$ of $\NT(A)$ adjacent
to $E_{S_{A_2}}$. Since $\NT(A)$ has only finitely many edges, the desired result will follow eventually. 
\end{proof}

\bibliographystyle{hamsalpha}
\bibliography{biblio}

\end{document}